\documentclass[british, a4paper,11pt]{article}

\usepackage[utf8]{inputenc}
\usepackage[british]{babel}

\usepackage{amsthm,amssymb,bbm,stmaryrd}

\usepackage{comment}

\usepackage{libertinus} 
\usepackage[T1]{fontenc}
\usepackage{libertinust1math} 
\usepackage[cal=euler, scr=rsfso]{mathalpha} 
\useosf
\usepackage{cancel}
\usepackage{needspace}

\usepackage{microtype,soul}

\usepackage{numprint}
\usepackage{framed}

\usepackage[explicit]{titlesec}

\usepackage{todonotes}

\titleformat{\section}[block]{\normalfont\large\bfseries\boldmath}{\raggedright\makebox[1em][l]{\thesection.}}{.25em}{#1}
\titleformat{\subsection}[block]{\normalfont\bfseries\boldmath}{\raggedright\makebox[1em][l]{\thesubsection.}}{1em}{#1}
\titleformat{\subsubsection}[runin]{\normalfont\bfseries\boldmath}{\raggedright\makebox[1em][l]{\thesubsubsection.}}{1.5em}{#1.~\hbox{---}}

\renewenvironment{abstract}{%
\begin{center}
\begin{minipage}{.9\textwidth}\linespread{1.05}\selectfont\small
\makebox[5em][l]{\bfseries\abstractname.~\hbox{---}}
\normalfont}
{\par\vspace{1em}
\end{minipage}
\end{center}
}

\usepackage[a4paper,margin=2.5cm]{geometry}
\selectfont

\usepackage{tikz}
	\usetikzlibrary{calc}
	\usetikzlibrary{decorations.markings}
	\usetikzlibrary{arrows,patterns}

\usepackage[font=sf]{caption}
\usepackage{subcaption}

\usepackage{hyperref}
\hypersetup{
	colorlinks=true,
		citecolor=blue!60!black,
		linkcolor=red!60!black,
		urlcolor=green!40!black,
		filecolor=yellow!50!black,
	breaklinks=true,
	pdfpagemode=UseNone,
	bookmarksopen=false,
}

\usepackage{enumitem}

\numberwithin{equation}{section}

\newtheorem{thm}{\bfseries \upshape Theorem}
\newtheorem{lem}{Lemma}
\newtheorem{prop}{Proposition}
\newtheorem{cor}{Corollary}

\theoremstyle{definition}

\newtheorem{defi}{Definition}
\newtheorem*{openproblem}{Open problem}

\newcommand{\R}{\mathbb{R}}

\usepackage{bbm}
\newcommand{\ind}{\mathbbm{1}}

\newcommand{\eqloi}[1][n]{\enskip\mathop{=}^{(d)}_{}\enskip}

\usepackage{mathtools}

\let\originalleft\left
\let\originalright\right
\renewcommand{\left}{\mathopen{}\mathclose\bgroup\originalleft}
\renewcommand{\right}{\aftergroup\egroup\originalright}

\DeclareSymbolFont{extraup}{U}{zavm}{m}{n}
\DeclareMathSymbol{\vardspade}{\mathalpha}{extraup}{81}
\DeclareMathSymbol{\varheart}{\mathalpha}{extraup}{86}
\DeclareMathSymbol{\vardiamond}{\mathalpha}{extraup}{87}
\DeclareMathSymbol{\varclub}{\mathalpha}{extraup}{84}
\DeclareMathOperator{\rmsupp}{\mathrm{supp}}

\makeatletter
\renewcommand*{\@fnsymbol}[1]{\ensuremath{\ifcase#1\or \vardspade \or \varheart\or \vardiamond \or \varclub \or
  \mathsection\or \mathparagraph\or \|\or **\or \dagger\dagger
  \or \ddagger\ddagger \else\@ctrerr\fi}}
\makeatother

\newcommand{\calG}{\mathscr{G}}
\newcommand{\calH}{\mathscr{H}}
\newcommand{\calL}{\mathscr{L}}
\newcommand{\calM}{\mathscr{M}}
\newcommand{\calP}{\mathscr{P}}
\newcommand{\calW}{\mathscr{W}}

\newcommand{\lseg}{\llbracket}
\newcommand{\rseg}{\rrbracket}
\usepackage{booktabs}
\usepackage{longtable}

\usepackage{algorithm}
\usepackage{algpseudocode}

\author{
	Philippe \textsc{Bouafia}\thanks{Fédération de Mathématiques FR3487, CNRS, CentraleSupélec, 3 rue Joliot Curie, 91190 Gif-sur-Yvette, France.\hfill \href{mailto:philippe.bouafia@centralesupelec.fr}{\texttt{philippe.bouafia@centralesupelec.fr}}} 
}

\title{Magnitude and diversity of trees}

\begin{document}

\maketitle

\begin{abstract}
We compute the magnitude (an isometric invariant of metric spaces) of compact $\R$-trees and show that it equals $1 + L/2$, where $L \in [0, \infty]$ denotes the total length. Although length is the only geometric invariant captured by magnitude, we show that diversity-maximizing measures on compact $\R$-trees are more sensitive to the branching structure as they tend to be more concentrated toward the leaves: their support contains no branch points. In the finite case, we further show that maximum diversity on a weighted tree can be computed in polynomial time.
\end{abstract}




{\small
\tableofcontents
}

\section{Introduction}

In this article, we investigate two invariants of metric spaces—\emph{magnitude} and \emph{maximum diversity}—and apply them to several classes of trees. The notion of magnitude has a twofold origin: it arises from theoretical ecology \cite{solow1994measuring} as a way to quantify biodiversity, and its systematic mathematical study was initiated by Leinster, independently motivated by ideas from category theory \cite{Leinster2013Magnitude, LeinsterMeckes2017156193}. Its variant, maximum diversity, has been further studied by, among others, Leinster, Meckes and Roff, in \cite{leinster_meckes_diversity, Meckes2015Magnitude, LeinsterRoff2021}.

To motivate the results, we briefly recall the definition of magnitude for a finite metric space $(X,d)$, where the notion was first introduced. Define the matrix $Z \colon X \times X \to \R$ by $Z(x,y) = e^{-d(x,y)}$; its entries quantify the similarity between pairs of points and are maximal when $x = y$. Generically, $Z$ is invertible, which guarantees the existence of a function $w \colon X \to \R$ solving the linear system $Zw = \ind_X$. The value $w(x)$ may be interpreted as the weight of the point $x$ in $X$, and the total $|X| = \sum_{x \in X} w(x)$ is called the \emph{magnitude} of $X$. This notion can be viewed as a metric analogue of cardinality: if all points are mutually at infinite distance, then $w(x)=1$ for all $x$, and $|X|$ coincides with the cardinality of $X$.

Magnitude is thus a relatively tractable invariant, arising from a linear problem. Due to its category-theoretic origins, it also behaves well under operations such as $\ell^1$-products and wedge sums, making it particularly well suited to the study of trees. The weights $w(x)$ are however not required to be non-negative. By contrast, \emph{maximum diversity} can be regarded as a constrained variant of magnitude in which only non-negative weights are allowed. A probability measure $\mu \colon X \to [0, 1]$ is said to be diversity-maximizing if it minimizes the quantity
\[
\mu^\top Z \mu = \sum_{x,y \in X} e^{-d(x,y)} \mu(x)\mu(y)
\]
which represents the average similarity between two independent samples drawn according to $\mu$. A somewhat counterintuitive feature of this theory is that the support of a diversity-maximizing measure, which one may expect to be highly spread out, need not coincide with the whole space. In the absence of additional geometric information, determining such a measure typically requires enumerating all possible supports $Y \subseteq X$, leading to exponential complexity.

We study these quantities for the following classes of compact metric spaces:
\begin{itemize}
\item \emph{Weighted trees}: graphs in which each edge is assigned a length. Here, the metric space consists of the finite set of vertices.
\item \emph{Simplicial trees}: similar to weighted trees, but with edges included as part of the metric space.
\item \emph{$\R$-trees}: continuous analogues of simplicial trees, which may exhibit highly intricate branching structures. Compact $\mathbb{R}$-trees arise as Gromov--Hausdorff limits of simplicial trees.
\end{itemize}

All these spaces are of negative type and therefore fit naturally into the theory of magnitude and maximum diversity in the compact setting developed by Meckes in~\cite{Meckes2013PositiveDefinite}. Moreover, the latter two classes come equipped with a natural notion of total length. Our first main result shows that this total length is the only geometric quantity detected by magnitude. In the case of $\R$-trees, this length may be infinite, yielding new examples of compact metric spaces of negative type with infinite magnitude. This provides a new answer to a question raised in \cite{Meckes2013PositiveDefinite, Meckes2015Magnitude, LeinsterMeckes2017156193}; a counterexample had previously been constructed in \cite{MeckesLeinster2023ExtremalMagnitude}.

This suggests that much of the finer geometry of trees is captured instead by the \emph{maximum diversity function}, which associates to each $t>0$ the maximum diversity $|(X, td)|_+$ of the rescaled space $(X, td)$. For instance, Meckes showed that, for a general compact metric space, the upper and lower Minkowski dimensions can be recovered from its asymptotic behavior \cite{Meckes2015Magnitude}.

Any compact $\R$-tree can be decomposed into two parts: its skeleton $\operatorname{Sk}(X)$, which carries its length, and its set of leaves $\operatorname{Leaves}(X)$. The skeleton is a relatively simple object of Hausdorff dimension $1$, whereas $\operatorname{Leaves}(X)$ may have Hausdorff dimension greater than $1$. In such cases, the dimension of $X$ is governed by the leaves, and in light of the preceding discussion, one expects diversity-maximizing measures to concentrate on this set.

As noted above, the main difficulty in determining diversity-maximizing measures lies in identifying their support—a problem that remains open even for Euclidean balls. We therefore modestly focus on establishing geometric constraints on this support. Our main result for compact $\R$-trees shows that branch points do not belong to the support of the diversity-maximizing measure. When branch points are dense—as is typical in random trees, such as the Brownian tree—this implies that such measures are supported on a meager set. This phenomenon has a biological interpretation: in an ecosystem, most ancient species, even if they could be revived, would not contribute to maximizing biodiversity.

For weighted trees, by contrast, the theory takes a more computational turn. This is the only setting in which maximum diversity can be computed exactly, and we provide a polynomial-time algorithm to do so.

\paragraph{Plan of the paper.} The structure of the article is as follows. In Section~\ref{sec:max_div_general}, we introduce the notions of magnitude and maximum diversity in the setting of compact metric spaces of negative type. This section largely recalls known results, sometimes with simplified proofs. In contrast to the presentation in the introduction, we begin with maximum diversity, which we find more intuitive. In Section~\ref{sec:magtree}, we introduce the various classes of trees under consideration and compute their magnitude. Section~\ref{sec:max_div_wtree} presents an algorithm for computing the maximum diversity of weighted trees, as well as the associated maximizing measure. The correctness of the algorithm yields results on the support of this measure, in particular showing that branching points do not belong to it when they are at distance at most $\log 2$ from their neighbors. Finally, in Section~\ref{sec:support}, we establish an analogous result in the continuous setting. Most notation, beyond standard conventions, is introduced progressively throughout the paper. Appendix~\ref{sec:notations} provides a summary of the notation used and may be consulted as needed.

\paragraph{Acknowledgments.} The author is grateful to Erick Herbin for helpful discussions.

\section{Maximum diversity and magnitude of compact metric spaces of negative type}
\label{sec:max_div_general}

In this section, we introduce the concepts of \emph{maximum diversity} and \emph{magnitude} of a metric space $X$. Following Meckes in \cite{Meckes2013PositiveDefinite}, the extension of this theory to compact metric spaces becomes more natural upon imposing an additional condition on $X$, namely that it be of \emph{negative type}. This assumption is not overly restrictive: the class of compact metric spaces of negative type includes all compact subspaces of $L^1$ and $L^2$, as well as the various notions of trees that will arise in this work.

\subsection{Maximizing diversity}
\label{subsec:max_div}

Let $(X,d)$ be a compact metric space, where the distance $d$ is allowed to take the value $+\infty$ (although this will not occur in the situations considered in this paper). The \emph{similarity kernel} is the function $Z \colon X \times X \to [0,1]$ defined by $Z(x,y) = e^{-d(x,y)}$.
As the name suggests, $Z(x,y)$ measures the similarity between the points $x$ and $y$; it attains its maximal value precisely when $x = y$.

We denote by $\calM(X)$ the linear space of signed Borel measures on $X$, and by $\calP(X)$ the subset of probability measures. We equip $\calP(X)$ with the weak topology, that is, the coarsest topology for which all maps
\[
\mu \mapsto \int_X f(x)\,\mu(dx), \qquad f \in C(X)
\]
are continuous. Here, $C(X)$ denotes the linear space of continuous functions on $X$, equipped with the supremum norm. Doing so, $\calP(X)$ becomes a compact metrizable space.

To each measure $\mu \in \calM(X)$, we associate the function $Z\mu \colon X \to \R$ defined by
\[
Z\mu(x) = \int_X Z(x,y)\,\mu(dy)
\]
By a standard application of the Lebesgue dominated convergence theorem, the function $Z\mu$ is continuous.
We then define a bilinear form on $\calM(X)$ by
\begin{equation}
\label{eq:def_similarity_form}
\langle \mu, \nu \rangle_{\calW} 
= \int_X Z\mu \, d\nu 
= \int_X \int_X Z(x,y)\,\mu(dx)\nu(dy), 
\qquad \mu, \nu \in \calM(X)
\end{equation}
The symmetry of $Z$ immediately implies that $\langle \cdot, \cdot \rangle_{\calW}$ is symmetric.

When $\mu$ is a probability measure, the quantity $Z\mu(x)$ can be interpreted as the \emph{typicality} of the point $x$: it represents the average similarity between $x$ and a random point of $X$ drawn according to $\mu$. In the same spirit, $\langle \mu, \mu \rangle_{\calW}$ corresponds to the average similarity between two independent random points of $X$, both distributed according to $\mu$. 
Accordingly, diversity is maximized when $\langle \mu, \mu \rangle_{\calW}$ is minimized. We therefore define the \emph{maximum diversity} of $X$ by
\begin{equation}
\label{eq:def_diversity}
|X|_+ = \sup_{\mu \in \calP(X)} \frac{1}{\langle \mu, \mu \rangle_{\calW}}
\end{equation}
This notion of diversity belongs to a broader family of diversity measures introduced by Leinster and Roff in \cite{LeinsterRoff2021} (see also \cite{leinster_meckes_diversity} for the case of finite metric spaces), parametrized by a real number $q \geq 0$. The notion considered here corresponds to the case $q = 2$.
Leinster and Roff proved the existence of probability measures that simultaneously maximize these diversities for all $q \geq 0$. Here, we establish a weaker result for the case $q=2$ alone:  the supremum in~\eqref{eq:def_diversity} is attained by at least one \emph{diversity-maximizing measure}. This follows from the following well-known continuity result.
\begin{lem}
\label{lem:2.1}
Let $X$ be a compact metric space. The map $\mu \in \calP(X) \mapsto \mu \otimes \mu \in \calP(X \times X)$ is continuous.
\end{lem}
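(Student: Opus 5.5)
Since $\calP(X)$ is metrizable, it suffices to prove sequential continuity: if $\mu_n \to \mu$ weakly in $\calP(X)$, then $\mu_n \otimes \mu_n \to \mu \otimes \mu$ weakly in $\calP(X \times X)$, i.e.\ $\int_{X\times X} F\, d(\mu_n\otimes\mu_n) \to \int_{X\times X} F\, d(\mu\otimes\mu)$ for every $F \in C(X \times X)$.

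The first step handles product test functions $F(x,y) = f(x)g(y)$ with $f,g \in C(X)$. By Fubini, $\int F\, d(\mu_n \otimes \mu_n) = \left(\int f\, d\mu_n\right)\left(\int g\, d\mu_n\right)$, and each factor converges by weak convergence, so the product converges to $\int F \, d(\mu\otimes\mu)$. By linearity, convergence then holds for every $F$ in the subalgebra $\mathcal{A} \subseteq C(X\times X)$ spanned by such products.

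The second step is a density argument. The algebra $\mathcal{A}$ contains the constants and separates points of $X \times X$: given $(x,y) \neq (x',y')$ with, say, $x \neq x'$, the function $(u,v) \mapsto d(u,x)$ (truncated, e.g.\ $\min(d(u,x),1)$, to stay continuous and bounded if $d$ may take the value $+\infty$) separates them. Since $X \times X$ is compact, the Stone--Weierstrass theorem shows that $\mathcal{A}$ is dense in $C(X\times X)$ for the supremum norm. Given $F$ and $\varepsilon>0$, choose $G\in\mathcal{A}$ with $\|F-G\|_\infty<\varepsilon$. All measures involved are probability measures, so
\[
\left|\int F\, d(\mu_n\otimes\mu_n) - \int F\, d(\mu\otimes\mu)\right| \le 2\varepsilon + \left|\int G\, d(\mu_n\otimes\mu_n) - \int G\, d(\mu\otimes\mu)\right|,
\]
and the last term tends to $0$. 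This gives the claim.

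The only point needing care is the density step, specifically checking the Stone--Weierstrass hypotheses: that $X\times X$ is compact Hausdorff, which is where compactness of $X$ is used, and that point separation survives the possibility of infinite distances. The truncation above resolves the latter. The uniform mass bound by $1$, which makes the $\varepsilon$-approximation uniform in $n$, is automatic because we work within $\calP$.
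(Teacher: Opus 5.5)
Your proposal is correct and follows the same route as the paper's proof: convergence for separated-variable test functions $f(x)g(y)$, Stone--Weierstrass density of their span in $C(X\times X)$, and a uniform approximation step that works because all measures involved have total mass $1$. The only cosmetic difference is that you argue with sequences (using metrizability of $\calP(X)$), whereas the paper phrases the last step as $\Phi_f$ being a uniform limit on $\calP(X)$ of the continuous maps $\Phi_{f_n}$.
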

\begin{proof}
We need to prove the continuity of all linear maps
\[
\Phi_f \colon \mu \in \calM(X) \mapsto \int_{X \times X} f(x,y)\, \mu \otimes \mu(dx,dy)
\]
for $f \in C(X \times X)$.

When $f$ has separated variables, i.e., $f(x,y)=g(x)h(y)$ for some $g,h \in C(X)$, we may rewrite
\[
\Phi_f(\mu)=\left(\int_X g(x)\, \mu(dx)\right)\left(\int_X h(y)\, \mu(dy)\right)
\]
and continuity of $\Phi_f$ follows immediately from the definition of the weak topology.
The subspace $C(X)\otimes C(X) \subseteq C(X \times X)$ of finite linear combinations of functions with separated variables is dense by the Stone--Weierstrass theorem. Consequently, every $f \in C(X \times X)$ is the uniform limit of a sequence $(f_n)$ in $C(X)\otimes C(X)$.
One then checks that $\Phi_f$ is the uniform limit of the corresponding maps $\Phi_{f_n}$, which implies the continuity of $\Phi_f$.
\end{proof}

\begin{prop}
 A compact metric space has at least one diversity-maximizing measure.
\end{prop}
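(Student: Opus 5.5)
The plan is to show that the map $\mu \mapsto \langle \mu, \mu \rangle_{\calW}$ is continuous on the compact space $\calP(X)$. It is also bounded below by a positive constant, so $\mu \mapsto 1/\langle \mu,\mu\rangle_{\calW}$ is continuous and attains its supremum.

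First, write
\[
\langle \mu, \mu \rangle_{\calW} = \int_{X\times X} Z(x,y)\,\mu\otimes\mu(dx,dy).
\]
If $Z$ is continuous on $X\times X$, this is the composition of the continuous map $\mu\mapsto\mu\otimes\mu$ of Lemma~\ref{lem:2.1} with the continuous linear functional $\nu\mapsto\int Z\,d\nu$ on $\calP(X\times X)$. The function $Z = e^{-d}$ is continuous and takes values in $[0,1]$ when $d$ is finite. When $d$ may take the value $+\infty$, the set $\{d=\infty\}$ is open and closed: the relation $d(x,y)<\infty$ is an equivalence relation whose classes are open balls $B(x,\infty)$. So $Z$, extended by $e^{-\infty}=0$, is still continuous. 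The required continuity thus follows directly.

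Second, I need $\langle\mu,\mu\rangle_{\calW}>0$ for every $\mu\in\calP(X)$, so that the reciprocal is finite and continuous. I expect this step to be the only real obstacle, because $Z$ may vanish, or nearly vanish, off the diagonal.

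To handle it, use compactness. Cover $X$ by finitely many open balls $B_1,\dots,B_N$ of radius $1/2$. Given a probability measure $\mu$, some ball $B_i$ satisfies $\mu(B_i)\ge 1/N$. Any two points of $B_i$ are at distance less than $1$, so $Z\ge e^{-1}$ on $B_i\times B_i$. Since $Z\ge 0$ everywhere,
\[
\langle\mu,\mu\rangle_{\calW}\ \ge\ \int_{B_i\times B_i} Z\, d(\mu\otimes\mu)\ \ge\ e^{-1}N^{-2}>0 .
\]
Hence $\mu\mapsto 1/\langle\mu,\mu\rangle_{\calW}$ is a continuous real-valued function on $\calP(X)$, and in particular $|X|_+<\infty$.

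Finally, $\calP(X)$ is compact (and nonempty, since $X$ is assumed nonempty) for the weak topology, as recalled in the paper. A continuous real function on a compact space attains its maximum, so some $\mu\in\calP(X)$ realizes the supremum in~\eqref{eq:def_diversity}. This $\mu$ is a diversity-maximizing measure.
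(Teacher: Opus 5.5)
Your proof is correct and follows the paper's route: continuity of $\mu \mapsto \langle \mu,\mu\rangle_{\calW}$ via Lemma~\ref{lem:2.1}, combined with compactness of $\calP(X)$. The extra details you supply are correct and fill in points the paper leaves implicit. These are the continuity of $Z$ when $d$ may take the value $+\infty$, and the uniform lower bound $\langle\mu,\mu\rangle_{\calW} \ge e^{-1}N^{-2}$, which makes the reciprocal continuous and $|X|_+$ finite.
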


\begin{proof}
This follows from the continuity of $\mu \mapsto \langle \mu, \mu \rangle_{\calW}$, guaranteed by Lemma~\ref{lem:2.1} and the compactness of $\calP(X)$.
\end{proof}

\subsection{Metric spaces of negative type}
\label{subsec:negtype}

It is necessary to restrict our attention to a suitable class of metric spaces for which the notions of diversity and magnitude (to be introduced in Subsection~\ref{subsec:magnitude_and_diversity}) behave in a sensible manner. In particular, we would like to ensure uniqueness of the diversity-maximizing measure. At the same time, this class should be broad enough to include all relevant examples. A convenient property is that the bilinear form $\langle \cdot, \cdot \rangle_{\calW}$ be positive definite.

A finite metric space $X$ is said to be \emph{positive definite} if its similarity kernel $Z$ defines a positive definite matrix. A compact metric space is called \emph{positive definite} if all of its finite subsets are positive definite; equivalently, if $\langle \cdot, \cdot \rangle_{\calW}$ is positive definite on $\calM_{\mathrm{atom}}(X)$, the subspace of $\calM(X)$ spanned by the Dirac masses $\delta_x$, for $x \in X$. The reference on positive definite metric spaces is \cite{Meckes2013PositiveDefinite}.

A metric space $(X,d)$ is said to be \emph{of negative type} if $(X,\sqrt{d})$ admits an isometric embedding into a Hilbert space. For instance, the interval $[0, L]$ is of negative type, as witnessed by the map $[0, L] \to L^2([0, L])$ that sends $x$ to $\ind_{[0, x]}$. The terminology comes from a characterization due to Schoenberg, which we shall not use here. This property is stronger than positive definiteness, as we now show.

\begin{thm}
\label{thm:posdef}
  Let $X$ be a compact metric space of negative type. The bilinear form $\langle \cdot, \cdot \rangle_\calW$ is positive definite.
\end{thm}

\begin{proof}
  Let $\varphi \colon (X, \sqrt{d}) \to (H, \| \cdot \|)$ be an isometric embedding into some Hilbert space $H$. Let us denote by $\calM_c(H)$ the space of compactly supported signed Borel measures on $H$. We define the following bilinear form on $\calM_c(H)$
  \[
  \calG(\mu, \nu) = \int_H \int_H e^{-\|y - x\|^2} \, \mu(dx) \nu(dy), \qquad \mu, \nu \in \calM_c(H)
  \]
  with a Gaussian kernel. It is related to $\langle \cdot, \cdot \rangle_\calW$ by
  \[
  \langle \mu', \nu' \rangle_{\calW} = \int_X \int_X e^{-\| \varphi(y) - \varphi(x) \|^2} \, \mu'(dx) \nu'(dy) = \calG(\varphi_\# \mu', \varphi_\# \nu')
  \]
  Thus, it suffices to establish the positive definiteness of $\calG$.
  
  {\bf Step 1: $\calG$ is positive semidefinite on $\calM_{\mathrm{atom}}(H)$.} Consider a measure of the form $\mu = \sum_{i=1}^n \alpha_i \delta_{x_i}$. In this case,
\[
\calG(\mu, \mu) = \sum_{i,j=1}^n \alpha_i \alpha_j e^{-\|x_j - x_i\|^2}
\]
and we need to show that this quantity is nonnegative. Since the points $x_1, \dots, x_n$ lie in a finite-dimensional subspace of $H$ (of dimension at most $n$), we may assume without loss of generality that $H = \mathbb{R}^n$ for the remainder of this step.

By the Fourier transform, we have for all $x \in \mathbb{R}^n$,
\[
e^{-\|x\|^2} = \frac{1}{2^n \pi^{n/2}} \int_{\mathbb{R}^n} \exp\!\left(-\frac{\|\xi\|^2}{4}\right) e^{-i \langle \xi, x \rangle}\, d\xi
\]
Substituting this identity into the expression for $\calG(\mu,\mu)$ yields
\[
\calG(\mu, \mu)
= \frac{1}{2^n \pi^{n/2}} \int_{\mathbb{R}^n} \exp\!\left(-\frac{\|\xi\|^2}{4}\right)
\left|\sum_{k=1}^n \alpha_k e^{-i \langle \xi, x_k \rangle}\right|^2 \, d\xi \ge 0
\]
which establishes the claim.

  {\bf Step 2: $\calG$ is positive semidefinite on $\calM_c(H)$.} This follows from a standard density argument. Let $K$ denote the support of a measure $\mu \in \calM_c(X)$, and fix $\varepsilon > 0$. The function $(x,y) \mapsto e^{-\|x-y\|^2}$ is uniformly continuous on $K \times K$, so there exists a modulus of uniform continuity $\delta$ corresponding to~$\varepsilon$.

Let $B_1, \dots, B_n$ be a partition of $K$ into Borel sets of diameter at most $\delta/\sqrt{2}$ (so that each product $B_i \times B_j$ has diameter at most $\delta$). For each $i \in \{1,\dots,n\}$, choose a point $x_i \in B_i$, and define
\[
\nu = \sum_{i=1}^n \mu(B_i)\,\delta_{x_i}
\]
By the first step, we have $\calG(\nu,\nu) \ge 0$.

Moreover,
\[
\calG(\mu, \mu) - \calG(\nu, \nu)
= \sum_{i,j=1}^n \int_{B_i} \int_{B_j}
\left(e^{-\|x-y\|^2} - e^{-\|x_i - x_j\|^2}\right)\,\mu(dx)\mu(dy)
\]
so that, by uniform continuity,
\[
\calG(\mu, \mu) - \calG(\nu, \nu) \ge -\varepsilon\, \|\mu\|(H)^2
\]
We therefore obtain $\calG(\mu, \mu) \ge -\varepsilon \|\mu\|(H)^2$, and since $\varepsilon > 0$ is arbitrary, it follows that $\calG(\mu, \mu) \ge 0$.

{\bf Step 3: $\calG$ is positive definite on $\calM_c(H)$.} Let $\mu \in \calM_c(H)$ be such that $\calG(\mu,\mu)=0$. Since $\calG$ is already known to be positive semidefinite, it follows from the Cauchy--Schwarz inequality that
$\calG(\mu,\nu)=0$ for all $\nu \in \calM_c(H)$.
Therefore,
\[
\int_H \left(\int_H e^{-\|x-y\|^2}\,\mu(dx)\right)\nu(dy)=0
\]
for all $\nu \in \calM_c(H)$, which in particular implies, by choosing $\nu=\delta_y$, that
\[
\int_H e^{-\|x-y\|^2}\,\mu(dx)=0 \qquad \text{for all } y \in H
\]

Let $K$ denote the compact support of $\mu$, and for each $y \in H$ define the function $e_y \colon K \to \mathbb{R}$ by $e_y(x)=e^{-2\langle y,x\rangle}$.
Note that $e_y e_{y'}=e_{y+y'}$ for all $y,y' \in H$ and $e_0 = \ind_K$. It follows that the vector space $E \subseteq C(K)$ spanned by the functions $e_y$ is in fact a real algebra. Moreover, $E$ separates the points of $K$. By the Stone--Weierstrass theorem, $E$ is dense in $C(K)$.

We also observe that for every $f \in E$,
\[
\int_K e^{-\|x\|^2} f(x)\,\mu(dx)=0
\]
Indeed, writing $f=\sum_{i \in I} \lambda_i e_{y_i}$, we obtain
\[
\int_K e^{-\|x\|^2} f(x)\,\mu(dx)
= \sum_{i \in I} \lambda_i \int_K e^{-\|x\|^2} e^{-2\langle y_i,x\rangle}\,\mu(dx)
= \sum_{i \in I} \lambda_i e^{\|y_i\|^2} \int_K e^{-\|x-y_i\|^2}\,\mu(dx)
=0
\]
Finally, the functions of the form $x \mapsto e^{-\|x\|^2} f(x)$, with $f \in E$, form a dense subset of $C(K)$. It follows that $\mu=0$, as desired.
\end{proof}

If $X$ is of negative type, then any rescaling $(X, t d)$ with $t>0$ is also of negative type. In particular, it follows that all rescalings of a metric space satisfying the assumptions of Theorem~\ref{thm:posdef} are positive definite.

A converse to this fact was established by Meckes: a metric space is of negative type if and only if all its rescalings are positive definite (compactness is not needed here), \cite[Theorem~3.3]{Meckes2013PositiveDefinite}.

In the proof above, Steps 1 and 2 are standard (Step 1 can be found, for instance, in \cite{schoenberg1938metric}). Step 3—the observation that $\langle \cdot, \cdot \rangle_{\calW}$ is positive definite on the whole of $\calM(X)$, and not only on $\calM_{\mathrm{atom}}(X)$—appears to be new. It follows that $\langle \cdot, \cdot \rangle_{\calW}$ defines an inner product on $\calM(X)$, whose associated norm we denote by $\|\cdot\|_{\calW}$. This leads to a simplified proof of the uniqueness of the diversity-maximizing measure.

\begin{cor}
  A compact metric space of negative type has a unique diversity-maximizing measure.
\end{cor}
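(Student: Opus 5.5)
Existence is already provided by the preceding Proposition, so only uniqueness needs proof, and the plan is to exploit the strict convexity of a squared Hilbert norm. By Theorem~\ref{thm:posdef}, $\langle \cdot,\cdot\rangle_\calW$ is an inner product on $\calM(X)$ with norm $\|\cdot\|_\calW$. A probability measure is diversity-maximizing exactly when it minimizes $\|\mu\|_\calW^2$ over the convex set $\calP(X)$. Such a minimum is positive: it is a value $\|\mu\|_\calW^2$ with $\mu\neq 0$, and the form is positive definite.

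Suppose $\mu$ and $\nu$ are two diversity-maximizing measures, and set $m = \|\mu\|_\calW^2 = \|\nu\|_\calW^2$, the minimal value. The midpoint $\frac{1}{2}(\mu+\nu)$ lies in $\calP(X)$. By the parallelogram identity, valid for any inner product,
\[
\Bigl\|\tfrac{\mu+\nu}{2}\Bigr\|_\calW^2 = \tfrac{1}{2}\|\mu\|_\calW^2 + \tfrac{1}{2}\|\nu\|_\calW^2 - \Bigl\|\tfrac{\mu-\nu}{2}\Bigr\|_\calW^2 = m - \tfrac14\|\mu-\nu\|_\calW^2 .
\]
Minimality of $m$ forces $\|\mu-\nu\|_\calW^2 \le 0$. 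Positive definiteness on all of $\calM(X)$, and not merely on atomic measures, then gives $\mu - \nu = 0$, that is, $\mu=\nu$.

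There is no real obstacle here. The only point needing care is that $\mu-\nu$ is a general signed Borel measure, typically non-atomic. This is exactly why Step~3 of Theorem~\ref{thm:posdef}, positive definiteness on the whole of $\calM(X)$, is needed; positivity on $\calM_{\mathrm{atom}}(X)$ alone would not suffice.
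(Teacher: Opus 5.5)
Your proof is correct and is essentially the paper's argument. The paper considers the quadratic $t \mapsto \|\mu + t(\nu-\mu)\|_\calW^2$ on $[0,1]$ and concludes that its leading coefficient $\|\nu-\mu\|_\calW^2$ must vanish; you evaluate that same quadratic at $t=1/2$ via the parallelogram identity, and both arguments then conclude using positive definiteness on all of $\calM(X)$ (Step~3 of Theorem~\ref{thm:posdef}).
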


\begin{proof}
Let $\mu$ and $\nu$ be two diversity-maximizing measures $\mu$ and $\nu$. Consider the function $\varphi \colon t \mapsto \|\mu + t(\nu - \mu)\|_{\calW}^2$. This is a polynomial in $t$ of degree at most $2$. Moreover, for $t \in [0,1]$, the measure $\mu + t(\nu - \mu)$ is a probability measure. Hence the restriction $\varphi_{\mid [0,1]}$ attains its minimum at both endpoints $t=0$ and $t=1$. Moreover, $\varphi \geq 0$ everywhere. The only possibility $\varphi$ is a constant quadratic polynomial. Thus, the $t^2$ coefficient $\| \nu - \mu \|_\calW^2$ vanishes and this implies that $\mu = \nu$. 
\end{proof}

\subsection{Magnitude and its relationship with maximum diversity}
\label{subsec:magnitude_and_diversity}

We present it here only in the setting of compact metric spaces of negative type, where it can be viewed as a modification of the notion of diversity.

It is obtained by replacing probability measures in the maximization problem~\eqref{eq:def_diversity} with signed measures: we allow parts of $X$ to be charged negatively. The magnitude, denoted $|X|$, is therefore defined by
\begin{equation}
\label{eq:def_magnitude}
|X| = \sup \left\{ \frac{1}{\| \mu \|_\calW^2} : \mu \in \calM(X) \text{ and } \mu(X) = 1\right\}
\end{equation}
Unlike the case of maximum diversity, the supremum is not necessarily attained (although, if it is attained, the measure $\mu$ is unique, as can be shown in the same way as in Corollary~1). It is a norm minimization problem in the pre-Hilbert space $(\calM(X), \| \cdot \|_\calW)$. One difficulty is that this space is never complete when $X$ is infinite.

If there exists a measure $\mu \in \calM(X)$ with $\mu(X) = 1$ that minimizes $\| \mu \|_\calW^2$, then $X$ is said to be \emph{weighted}. If, moreover, this measure is a probability measure, then $X$ is said to be \emph{positively weighted}; in that case, magnitude and maximum diversity coincide. The following proposition provides a criterion for $X$ to be weighted, see \cite[Definition~5.4]{LeinsterRoff2021}.

\begin{prop}
\label{prop:weighted}
$X$ is weighted if and only if there is a measure $w \in \calM(X)$ such that $Zw = \ind_X$. In this case, one has $|X| = w(X)$, and the supremum in~\eqref{eq:def_magnitude} is attained at $\mu = w / w(X)$.
\end{prop}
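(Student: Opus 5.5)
The proof is a Hilbert-space projection argument in the pre-Hilbert space $(\calM(X), \|\cdot\|_\calW)$, using that $\langle\cdot,\cdot\rangle_\calW$ is an inner product by Theorem~\ref{thm:posdef}. The central fact is the variational characterisation of minimisers on the affine hyperplane $A = \{\mu \in \calM(X) : \mu(X) = 1\}$. A measure $\mu \in A$ minimises $\|\cdot\|_\calW^2$ on $A$ if and only if $\langle \mu, \nu\rangle_\calW = 0$ for every $\nu$ with $\nu(X) = 0$.

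This equivalence follows by expanding $\|\mu + t\nu\|_\calW^2 = \|\mu\|_\calW^2 + 2t\langle \mu,\nu\rangle_\calW + t^2\|\nu\|_\calW^2$. If $\mu$ minimises, this quadratic in $t$ is smallest at $t=0$, so the linear coefficient vanishes. Conversely, if the linear coefficient always vanishes, then $\|\mu+\nu\|_\calW^2 \ge \|\mu\|_\calW^2$ for every admissible $\nu$.

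\emph{($\Leftarrow$)} Suppose $Zw = \ind_X$. Then $w \neq 0$, and $\|w\|_\calW^2 = \int Zw\,dw = w(X)$. Positive definiteness therefore gives $w(X) > 0$, so $\mu = w/w(X)$ lies in $A$. For $\nu$ with $\nu(X)=0$,
\[
\langle \mu, \nu\rangle_\calW = \frac{1}{w(X)}\int_X Zw\,d\nu = \frac{\nu(X)}{w(X)} = 0,
\]
so $\mu$ is a minimiser and $X$ is weighted. Its value is $\|\mu\|_\calW^2 = w(X)/w(X)^2 = 1/w(X)$, hence $|X| = w(X)$.

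\emph{($\Rightarrow$)} Let $\mu \in A$ be a minimiser. Orthogonality gives $\int Z\mu\,d\nu = 0$ for every $\nu$ of total mass zero. Taking $\nu = \delta_x - \delta_y$ shows $Z\mu(x) = Z\mu(y)$ for all $x,y$, so $Z\mu \equiv c$ is constant. Integrating against $\mu$ gives $c = \langle\mu,\mu\rangle_\calW = \|\mu\|_\calW^2 > 0$, the positivity coming from $\mu \neq 0$ (since $\mu(X)=1$). Then $w = \mu/c$ satisfies $Zw = \ind_X$. The identification $|X| = w(X)$ and $\mu = w/w(X)$ follows as in the first direction, together with uniqueness of the minimiser from the strict convexity argument of Corollary~1.

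I do not expect a serious obstacle. The main point requiring care is that $w(X) > 0$ and $c > 0$; both come from the strict positive definiteness in Theorem~\ref{thm:posdef}, not merely semidefiniteness. One must also remember that $Z\mu$ is defined pointwise as a continuous function, so that Dirac test measures are legitimate elements of $\calM(X)$.
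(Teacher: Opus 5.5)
Your proof is correct and follows essentially the same route as the paper. The forward direction is the same first-variation argument with $\delta_x - \delta_y$ showing $Z\mu$ is constant; for the converse the paper uses Cauchy--Schwarz with $\langle \mu, \nu\rangle_\calW = 1/w(X) = \|\mu\|_\calW^2$, where you use the equivalent orthogonality $\langle \mu, \nu - \mu\rangle_\calW = 0$ on the affine hyperplane (and your explicit $w(X) = \|w\|_\calW^2 > 0$, rather than just $w(X) \neq 0$, is exactly the positivity the paper implicitly needs).
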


\begin{proof}
Suppose that the supremum in~\eqref{eq:def_magnitude} is attained at $\mu$.
For all $x, y \in X$ and $t \in \R$, we have
\[
\left\|\mu + t(\delta_y - \delta_x) \right\|_\calW^2  
= \| \mu \|_\calW^2 + 2t \langle\mu, \delta_y - \delta_x \rangle_\calW + o(t)
\geq \| \mu \|_\calW^2
\]
which forces $\langle\mu, \delta_y - \delta_x\rangle_\calW = Z\mu(y) - Z\mu(x) = 0$. In other words, the function $Z\mu$ is constant, and necessarily $Z\mu = 1/|X|$. It then suffices to set $w = |X| \mu$.

Conversely, suppose there exists a measure $w \in \calM(X)$ such that $Zw = \ind _X$. Then $w \neq 0$ and
\[
\| w \|_\calW^2 = \int_X Zw \, dw = w(X)
\]
so in particular $w(X) \neq 0$. Define $\mu = w/w(X) \in \calM(X)$ and note that $\mu(X) = 1$.

Let $\nu \in \calM(X)$ such that $\nu(X) = 1$. By the Cauchy--Schwarz inequality,
$\langle \mu, \nu \rangle_\calW^2 \leq \| \mu \|_\calW^2 \, \| \nu \|_\calW^2$.
Since $Z\mu = 1 / w(X)$ on $X$, we have
\[
\langle \mu, \nu \rangle_\calW = \int_X Z\mu \, d\nu = \frac{1}{w(X)}
\qquad\text{ and }\qquad
\| \mu \|_\calW^2 = \frac{1}{w(X)}
\]
It follows that
\[
\frac{1}{w(X)^2} \leq \frac{1}{w(X)} \, \| \nu \|_\calW^2
\]
Equivalently,
\[
\frac{1}{\| \nu \|_\calW^2} \leq w(X)
\]
with equality if and only if $\nu = \mu$. Therefore $|X| = w(X)$, and $\mu$ attains the supremum in~\eqref{eq:def_magnitude}.
\end{proof}

The measure $w$ is called the \emph{weight} of $X$. The interval $[0,L]$ provides an example of a positively weighted metric space of negative type: one checks by a direct computation that
\[
w = \frac{\delta_0 + \delta_L + \calL^1}{2}	
\]
where $\calL^1$ denotes Lebesgue measure, satisfies the equation $Zw = \ind_{[0, L]}$. Consequently, the magnitude of an interval of length $L$ is $1 + L/2$.

We will see in the next paragraph that all finite metric spaces (of negative type) are weighted, and later that the same holds for simplicial trees. Apart from these cases, however, most spaces are not weighted. Meckes showed that, in general, the appropriate space in which to look for a ``weight'' is the completion of $(\calM(X), \| \cdot \|_\calW)$, although this is no longer a space of measures; see \cite[Section~3]{Meckes2015Magnitude}.

When $X$ is finite, the linear system $Zw = \ind_X$ always admits a solution, since $Z$ is symmetric and positive definite. Here, $Z$ is viewed as a square matrix, even though the elements of $X$ are not ordered, and $w$ and $\ind_X$ as column vectors. By Proposition~\ref{prop:weighted}, the magnitude of $X$ is
\[
|X| = \sum_{x \in X} w(x) = \sum_{x, y \in X} Z^{-1}(x, y)
\]
where $Z^{-1}$ denotes the inverse matrix.

In particular, if $X = \{x,y\}$ is a two-point metric space in which the distance between $x$ and $y$ is $L$, then $X$ is clearly of negative type and
\[
Z =
\begin{pmatrix}
1 & e^{-L} \\
e^{-L} & 1
\end{pmatrix}, \qquad
Z^{-1} = \frac{1}{1 - e^{-2L}}
\begin{pmatrix}
1 & -e^{-L} \\
-e^{-L} & 1
\end{pmatrix}
\]
It follows that $|X| = 1 + \tanh (L/2)$. The magnitude ranges between $1$ and $2$, and it matches our interpretation of magnitude as the effective number of points in $X$.

The next proposition is an analogue of Proposition~\ref{prop:weighted} for maximum diversity.

\begin{prop}
\label{prop:diversity-max-meas}
 Let $X$ be a compact metric space of negative type. A probability measure $\mu \in \calP(X)$ is diversity-maximizing if and only if there is a constant $C > 0$ such that
 \begin{equation}
 \label{eq:cond_diversity}
 Z \mu = C \text{ on } \rmsupp \mu \qquad \text{ and } \qquad Z \mu \geq C \text{ on } X
 \end{equation}
 In that case, $C = 1 / |X|_+$.
\end{prop}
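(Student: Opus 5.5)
We prove the two implications separately, in the same way as Proposition~\ref{prop:weighted}, using that $\|\cdot\|_\calW^2$ is convex on the convex set $\calP(X)$ (Theorem~\ref{thm:posdef}). We also use the identity $\langle \mu, \nu\rangle_\calW = \int_X Z\mu\, d\nu$.

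\emph{Sufficiency.} Suppose $\mu\in\calP(X)$ satisfies~\eqref{eq:cond_diversity}. Integrating $Z\mu$ against $\mu$, which lives on $\rmsupp\mu$ where $Z\mu = C$, gives $\|\mu\|_\calW^2 = C$. For any $\nu\in\calP(X)$, the inequality $Z\mu\ge C$ on $X$ gives $\langle\mu,\nu\rangle_\calW = \int_X Z\mu\,d\nu \ge C$. Cauchy--Schwarz then yields
\[
C^2 \le \langle \mu,\nu\rangle_\calW^2 \le \|\mu\|_\calW^2\,\|\nu\|_\calW^2 = C\,\|\nu\|_\calW^2 ,
\]
so $\|\nu\|_\calW^2 \ge C = \|\mu\|_\calW^2$. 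Hence $\mu$ is diversity-maximizing and $|X|_+ = 1/C$. Note that $C>0$ automatically, since $C = \|\mu\|_\calW^2$ and $\mu\neq 0$.

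\emph{Necessity.} Let $\mu$ be diversity-maximizing and set $C = \|\mu\|_\calW^2 = 1/|X|_+ > 0$. For every $\nu\in\calP(X)$ and $t\in[0,1]$, the measure $\mu + t(\nu-\mu)$ is a probability measure. Hence
\[
\|\mu + t(\nu-\mu)\|_\calW^2 = C + 2t\bigl(\langle \mu,\nu\rangle_\calW - C\bigr) + t^2\|\nu-\mu\|_\calW^2 \ge C .
\]
Letting $t\to 0^+$ gives $\int_X Z\mu\,d\nu \ge C$. Taking $\nu = \delta_x$ shows $Z\mu(x)\ge C$ for all $x\in X$.

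It remains to prove that $Z\mu = C$ on $\rmsupp\mu$. We have $\int_X (Z\mu - C)\,d\mu = \|\mu\|_\calW^2 - C = 0$, and the integrand is nonnegative, so $Z\mu = C$ $\mu$-almost everywhere. The function $Z\mu$ is continuous, so the set $\{Z\mu > C\}$ is open. It has $\mu$-measure zero, so it is disjoint from $\rmsupp\mu$: any point of the support has every neighbourhood of positive measure. Therefore $Z\mu = C$ on $\rmsupp\mu$.

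The only delicate point is this passage from "$\mu$-a.e." to "everywhere on the support", which relies on the continuity of $Z\mu$ noted earlier.
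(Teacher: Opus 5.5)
Your proof is correct and follows the paper's argument. Necessity is a first-order variation toward $\delta_y$ inside $\calP(X)$, with continuity of $Z\mu$ upgrading $\mu$-a.e.\ equality to equality on $\rmsupp\mu$; you spell out that step more carefully than the paper does. Sufficiency is the same Cauchy--Schwarz comparison as in the paper.
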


\begin{proof}
Suppose that $\mu$ is diversity-maximizing. For every $y \in X$ and every $t \in [0,1]$, the probability measure $(1-t)\mu + t\delta_y$ performs no better than $\mu$, that is,
\[
\left\|(1-t)\mu + t\delta_y \right\|_\calW^2 
= \| \mu \|_\calW^2 + 2t \langle\mu,\delta_y - \mu\rangle_\calW + o(t)
\geq \| \mu \|_\calW^2
\]
It follows that $\langle\mu,\delta_y - \mu\rangle_\calW = Z\mu(y) - \int_X Z\mu\, d\mu \geq 0$. Setting $C = \int_X Z\mu\, d\mu > 0$, which is the mean value of $Z\mu$, and using the continuity of $Z\mu$, we obtain~\eqref{eq:cond_diversity}. It is then clear that $C = 1/|X|_+$.

Conversely, assume that $\mu$ satisfies~\eqref{eq:cond_diversity}, and let $\nu \in \calP(X)$ be a competing measure. We immediately have
\[
\langle\mu,\nu\rangle_\calW = \int_X Z\mu \, d\nu \geq C
\]
and similarly $\langle\mu,\mu\rangle_\calW = C$. By the Cauchy--Schwarz inequality,
\[
C^2 \leq \langle\mu,\nu\rangle_\calW^2 \leq \|\mu\|_\calW^2 \| \nu \|_\calW^2 = C \| \nu \|_\calW^2
\]
so that $\|\nu \|_\calW^2 \geq C = \| \mu \|_\calW^2$. Hence $\mu$ is optimal.
\end{proof}

A somewhat counterintuitive fact is that a diversity-maximizing measure need not have full support ($\rmsupp \mu = X$). When it does, then by Propositions~\ref{prop:weighted} and~\ref{prop:diversity-max-meas} the space $X$ is positively weighted, a situation that is rather exceptional. This implies that, in biodiversity maximization problems, some species must necessarily be excluded; see \cite[Section~11]{leinster_meckes_diversity} for a discussion of this phenomenon.

The following provides a link between magnitude and maximum diversity:

\begin{thm}
Let $X$ be a compact metric space of negative type. One has
\[
|X|_+ = \sup \left\{ |Y| : Y \subseteq X \text{ is closed and positively weighted} \right\}
\]
and this supremum is attained when $Y$ is the support of the diversity-maximizing measure of $X$.
\end{thm}

\begin{proof}
The inequality $\geq$ is immediate: for every closed and positively weighted subset $Y \subseteq X$, one has
$|X|_+ \geq |Y|_+ = |Y|$.

For the reverse inequality, let $\mu$ be a diversity-maximizing measure on $X$, and set $Y = \rmsupp \mu$. By Proposition~\ref{prop:diversity-max-meas}, we have $Z\mu = 1/|X|_+$ on $Y$, which implies that
\[
\int_Y Z(x,y)\,\mu(dy) = \frac{1}{|X|_+} \qquad \text{for all } x \in Y
\]
Applying Proposition~\ref{prop:weighted} to $Y$ (instead of $X$), we deduce that the normalized restricted measure $|X|_+\,\mu_{\mid Y} \in \calM(Y)$ is a weight for $Y$. Since this measure is nonnegative, $Y$ is positively weighted, and
$|Y| = (|X|_+\,\mu_{\mid Y})(Y) = |X|_+$.
\end{proof}

When $X$ is finite, the previous theorem is known as the \emph{computation theorem} \cite[Theorem~7.1]{leinster_meckes_diversity}, since it provides an algorithm for computing $|X|_+$. One simply enumerates all subsets $Y \subseteq X$ and solves the linear system $Z_Y w_Y = \ind_Y$ for each of them (where $Z_Y$ denotes  the similarity kernel of $Y \subseteq X$). Then
\[
|X| = \max \left\{ \sum_{y \in Y} w_Y(y) : Y \subseteq X \text{ and } w_Y \geq 0 \right\}
\]
Unfortunately, this method has exponential complexity. This stems from the inherently nonlinear aspect of the problem, namely the determination of the support $Y$ of a diversity-maximizing measure.

\section{Magnitude of trees}
\label{sec:magtree}

\subsection{Weighted trees}
\label{subsec:wtree}

We begin by describing an operation with respect to which magnitude behaves well (unlike maximum diversity).
\begin{defi}
Let $(X,d_X)$ and $(Y,d_Y)$ be metric spaces such that $X \cap Y = \{a\}$. The \emph{wedge sum} of $X$ and $Y$ at $a$ is the space $X \cup Y$ endowed with the metric
\[
d(x,y)=
\begin{cases}
d_X(x,y) & \text{if } x,y \in X\\
d_Y(x,y) & \text{if } x,y \in Y\\
d_X(x,a)+d_Y(a,y) & \text{if } x \in X,\ y \in Y\\
d_X(y,a)+d_Y(a,x) & \text{if } x \in Y,\ y \in X
\end{cases}
\]
\end{defi}
The following result is likely to be known; part (b) is just a continuous analogue of \cite[Proposition~2.3.2]{Leinster2013Magnitude}.
\begin{prop}
\label{prop:wedge_sum_of_neg}
Let $X$ and $Y$ be compact metric spaces of negative type such that $X \cap Y = \{a\}$.
\begin{itemize}
\item[\upshape{(a)}] The wedge sum of $X$ and $Y$ at $a$ is of negative type.
\item[\upshape{(b)}] If $X$ and $Y$ are weighted, with weights $w_X$ and $w_Y$, then their wedge sum is weighted with weight
\[
w = \iota_{X\#} w_X + \iota_{Y\#} w_Y - \delta_a
\]
where $\iota_X$ and $\iota_Y$ denote the inclusion maps of $X$ and $Y$ into $X \cup Y$. Consequently,
\[
|X \cup Y| = |X| + |Y| - 1
\]
\end{itemize}
\end{prop}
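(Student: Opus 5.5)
For part (a), I will build an explicit embedding. Let $\varphi_X\colon (X,\sqrt{d_X})\to H_X$ and $\varphi_Y\colon (Y,\sqrt{d_Y})\to H_Y$ be isometric embeddings into Hilbert spaces. Translating, I may assume $\varphi_X(a)=0$ and $\varphi_Y(a)=0$. I then define $\varphi\colon X\cup Y\to H_X\oplus H_Y$, with the orthogonal direct sum, by $\varphi(x)=(\varphi_X(x),0)$ for $x\in X$ and $\varphi(y)=(0,\varphi_Y(y))$ for $y\in Y$. This is consistent at $a$.

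For $x,x'\in X$ the squared distance is $d_X(x,x')$, and similarly inside $Y$. For $x\in X$, $y\in Y$, Pythagoras gives
\[
\|\varphi(x)-\varphi(y)\|^2=\|\varphi_X(x)\|^2+\|\varphi_Y(y)\|^2=d_X(x,a)+d_Y(a,y)=d(x,y).
\]
So $\varphi$ is an isometric embedding of $(X\cup Y,\sqrt d)$. Compactness of the wedge sum is clear, since it is a union of two compact subsets and the metric restricts correctly on each. Measurability of the inclusions is also clear.

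For part (b), by Proposition~\ref{prop:weighted} it suffices to check that $Zw=\ind_{X\cup Y}$, where $Z$ is the kernel of the wedge sum. Take $x\in X$ and split the integral over the three pieces of $w$:
\[
Zw(x)=\int_X e^{-d_X(x,z)}\,w_X(dz)+\int_Y e^{-d_X(x,a)-d_Y(a,z)}\,w_Y(dz)-e^{-d_X(x,a)}.
\]
Here I use that for $z\in X\cap Y=\{a\}$ both formulas for $d$ agree, so the pushforward $\iota_{Y\#}w_Y$ is integrated correctly even if $w_Y$ charges $a$. The first term equals $Z_Xw_X(x)=1$. The second term factors as $e^{-d_X(x,a)}\,Z_Yw_Y(a)=e^{-d_X(x,a)}$, which cancels the third. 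Hence $Zw(x)=1$, and the case $x\in Y$ is symmetric.

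The main subtlety is purely bookkeeping: treating $a$ as lying in both pieces and making sure the atom $\delta_a$ exactly compensates the double counting. The formula $|X\cup Y|=w(X\cup Y)=w_X(X)+w_Y(Y)-1=|X|+|Y|-1$ then follows from Proposition~\ref{prop:weighted} applied to each of $X$, $Y$ and $X\cup Y$. Part (a) guarantees that the wedge sum is of negative type, so Proposition~\ref{prop:weighted} and the inner-product framework apply to it.
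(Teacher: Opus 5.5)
Your proposal is correct and follows essentially the paper's argument. For (a), the paper uses the same orthogonal-sum embedding, written as $(h_X(x),h_Y(a))$ and $(h_X(a),h_Y(x))$ rather than first translating $a$ to the origin; for (b), it verifies $Zw=\ind$ directly, as you do, except that it splits the integral over the domain into $X$, $Y$ and the doubly counted atom at $a$, whereas you split it over the three summands of $w$.
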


\begin{proof}
(a). Let $h_X : (X,\sqrt{d_X}) \to H$ and $h_Y : (Y,\sqrt{d_Y}) \to H'$ be isometric embeddings into Hilbert spaces $H$ and $H'$. Let $H \oplus_2 H'$ be the Hilbert space equipped with the norm $\|(u,v)\| = \left(\|u\|_H^2 + \|v\|_{H'}^2\right)^{1/2}$. One readily checks that the map
\[
x \mapsto
\begin{cases}
(h_X(x),h_Y(a)) & \text{if } x \in X\\
(h_X(a),h_Y(x)) & \text{if } x \in Y
\end{cases}
\]
is an isometric embedding of $(X \cup Y,\sqrt{d})$ into $H \oplus_2 H'$.

(b). This follows by a direct computation. Let $x \in X \cup Y$; without loss of generality, assume $x \in X$. Then
\begin{align*}
Zw(x)
&= \int_{X \cup Y} e^{-d(x,y)}\, w(dy) \\
&= \int_X e^{-d(x,y)}\, w(dy)
   + \int_Y e^{-d(x,y)}\, w(dy)
   - w(\{a\}) e^{-d(x,a)}
\end{align*}
The integral over $X$ is
\begin{align*}
\int_X e^{-d(x,y)}\, w(dy)
&= \int_X e^{-d_X(x,y)}\, w_X(dy)
  + \int_X e^{-d(x,y)}\, \iota_{Y\#} w_Y(dy)
  - e^{-d(x,a)} \\
&= 1 + e^{-d(x,a)} w_Y(\{a\}) - e^{-d(x,a)}
\end{align*}
while the integral over $Y$ is
\begin{align*}
\int_Y e^{-d(x,y)}\, w(dy)
&= \int_Y e^{-d(x,y)}\, \iota_{X\#} w_X(dy)
  + \int_Y e^{-d(x,y)}\, w_Y(dy)
  - e^{-d(x,a)} \\
&= e^{-d(x,a)} w_X(\{a\})
  + e^{-d(x,a)} \int_Y e^{-d_Y(a,y)}\, w_Y(dy)
  - e^{-d(x,a)} \\
&= e^{-d(x,a)} w_X(\{a\}).
\end{align*}
Combining these identities yields $Zw(x)=1$ on $X$, and by symmetry also on $Y$.
\end{proof}

\begin{defi}
A \emph{weighted tree} is a triple $(X,E,\ell)$ where
\begin{itemize}
\item $(X,E)$ is a finite simple undirected graph; that is, $X$ is a finite set of \emph{vertices}, and $E \subseteq \{\{x,y\} : x,y \in X,\ x \neq y\}$ is the set of \emph{edges};
\item as a graph, $(X,E)$ is a tree, i.e. it is connected and has no cycles;
\item $\ell \colon E \to \R^{>0}$ assigns to each edge $e \in E$ its \emph{length}.
\end{itemize}
\end{defi}
A typical edge will be denoted by $e$. This conflicts with the notation for the exponential function, but no confusion should arise.
The \emph{degree} of a vertex $x \in X$ is the number of its neighbours, $\deg x = \#\{ y \in X : \{x,y\} \in E \}$.
Vertices of degree one are called \emph{leaves}, and vertices of degree at least three are called \emph{branch points}. In the case where $X$ has a single vertex, this vertex has degree~$0$ and is also regarded as a leaf.

One can equip $X$ with a metric structure by defining, for all vertices $x,y \in X$, the distance $d(x,y)$ as the sum of the lengths of the edges along the unique simple path connecting $x$ to $y$. In other words, if $e_1,\dots,e_p$ are the edges of this path, we set $d(x,y) = \ell(e_1) + \cdots + \ell(e_p)$.

Starting from a single point, a weighted tree can be constructed by iteratively performing wedge sums of two-point spaces. This simple observation is at the heart of the following result. The observation that weighted tree are of negative type was already made in~\cite[Theorem~3.6]{Meckes2013PositiveDefinite}.
\begin{prop}
\label{prop:magnitude_weighted_tree}
Let $X$ be a weighted tree. Then $X$ is of negative type and its weight (solution of $Zw = \ind_X$) is given by
\[
w(x) = \sum_{e \ni x} \frac{1}{1 + e^{-\ell(e)}} - (\deg x - 1), \qquad x \in X
\]
The magnitude of $|X|$ is
\[
|X| = 1 + \sum_{e \in E} \tanh \left( \frac{\ell(e)}{2} \right)
\]
\end{prop}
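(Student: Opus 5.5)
We argue by induction on the number of vertices, building $X$ from a single point by repeated wedge sums with two-point spaces, as suggested just before the statement. The base case is a single vertex $x$. It is trivially of negative type, its weight is $w(x)=1$, and the formula gives $\sum_{e\ni x}(\cdots) - (0-1) = 1$ with an empty sum, so the magnitude is $1$. The key two-point computation comes from the inverse matrix displayed earlier. For $\{x,y\}$ at distance $L$, each weight equals $(1-e^{-L})/(1-e^{-2L}) = 1/(1+e^{-L})$, and the total is $2/(1+e^{-L}) = 1+\tanh(L/2)$. This matches the formula, since each endpoint has degree $1$ and so the correction term vanishes. A two-point space is of negative type because it embeds $\sqrt{d}$-isometrically into $\R$.

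For the inductive step, pick a leaf $y$ of $X$ (there is one as soon as $X$ has at least two vertices), with unique neighbour $a$ and edge $e_0=\{a,y\}$ of length $\ell_0$. Removing $y$ yields a weighted tree $X'$ whose induced metric is the tree metric. Moreover, $X$ is exactly the wedge sum at $a$ of $X'$ and the two-point space $\{a,y\}$: the path from any $x\in X'$ to $y$ passes through $a$. By Proposition~\ref{prop:wedge_sum_of_neg}(a) and the induction hypothesis, $X$ is of negative type.

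By Proposition~\ref{prop:wedge_sum_of_neg}(b), the weight is $w=w_{X'}+w_{\{a,y\}}-\delta_a$. We then check the formula vertex by vertex. For $x\notin\{a,y\}$, the degree and incident edges are unchanged, so $w(x)=w_{X'}(x)$ is correct. For $y$, we get $w(y)=1/(1+e^{-\ell_0})$, as required for a leaf. For $a$, the degree increases by one and a new term $1/(1+e^{-\ell_0})$ appears. The wedge formula adds exactly $1/(1+e^{-\ell_0})-1$, which matches the change in $\sum_{e\ni a}\frac{1}{1+e^{-\ell(e)}}-(\deg a-1)$.

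The magnitude follows either from $|X|=|X'|+|\{a,y\}|-1=|X'|+\tanh(\ell_0/2)$ by induction, or directly by summing the weights. In the direct computation, each edge contributes $2/(1+e^{-\ell(e)}) = 1+\tanh(\ell(e)/2)$, and $\sum_x(\deg x-1)=2\#E-\#X = \#E-1$. This gives $|X|=\#E+\sum_e\tanh(\ell(e)/2)-(\#E-1)$, which is the stated formula.

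The only point requiring care is verifying that removing a leaf and re-wedging reproduces precisely the tree metric, i.e.\ that the wedge-sum distance formula agrees with path lengths. This is immediate from the uniqueness of simple paths in a tree, so no real obstacle is expected.
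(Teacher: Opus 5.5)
Your proof is correct and follows essentially the same route as the paper: induction on $\#X$, removing a leaf and writing $X$ as the wedge sum at its neighbour of $X\setminus\{y\}$ and the two-point space $\{a,y\}$, then applying Proposition~\ref{prop:wedge_sum_of_neg}. Your explicit summation of the weights, using $\sum_x(\deg x-1)=\#E-1$, simply spells out the magnitude formula that the paper leaves implicit.
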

\begin{proof}
The result is proved by induction on $\#X$. The case where $X$ consists of a single vertex is immediate. Assume now that $\#X \geq 2$, and choose a leaf $p \in X$, whose unique neighbour is denoted by $q \in X$.

As a metric space, $X$ is the wedge sum of the weighted tree $X \setminus \{p\}$ and $\{p,q\}$ at the point $q$. By induction and Proposition~\ref{prop:wedge_sum_of_neg}, $X$ is of negative type. Recall that the magnitude of the two-point space $\{p,q\}$ is $1 + \tanh(d(p,q)/2)$, and that its weight assigns the value $(1 + e^{-d(p,q)})^{-1}$ to both $p$ and $q$.
Let $w_{X \setminus \{p\}}$ denote the weight of $X \setminus \{p\}$. By Proposition~\ref{prop:wedge_sum_of_neg}, the weight $w$ of $X$ is given by
\[
w(x) =
\begin{cases}
w_{X \setminus \{p\}}(x) & \text{if } x \in X \setminus \{p,q\} \\
(1 + e^{-d(p,q)})^{-1} & \text{if } x = p \\
w_{X \setminus \{p\}}(q) + (1 + e^{-d(p,q)})^{-1} - 1 & \text{if } x = q
\end{cases}
\]
This yields, by induction, the desired formula for $w$.
\end{proof}

\subsection{Simplicial metric trees}
\label{subsec:simplicial}

A \emph{segment} $S$ in a metric space $X$ is a subset isometric to a non-empty compact interval of $\mathbb{R}$. An \emph{endpoint} of $S$ is a point $x \in S$ such that $S \setminus \{x\}$ is connected; there are exactly two such points, unless $S$ consists of a single point.
A metric space $X$ is said to be \emph{uniquely geodesic} if, for any two points $x,y \in X$, there exists a unique segment whose set of endpoints is $\{x,y\}$. This segment is then denoted by $\lseg x,y \rseg$. We write $\rseg x,y \lseg = \lseg x,y \rseg \setminus \{x,y\}$, and similarly introduce the notations $\lseg x,y \lseg$ and $\rseg x,y \rseg$ in the obvious way. A subset $Y \subseteq X$ of a uniquely geodesic metric space is said to be \emph{geodesically convex} whenever $\lseg x, y \rseg \subseteq Y$ for all $x, y \in Y$.
\begin{defi}
\label{def:simplicial_tree}
A \emph{simplicial (metric) tree} is a metric space $X$ obtained by gluing finitely many geodesic segments. More precisely, $X$ can be constructed in finitely many steps as
$X^{(0)} \subseteq X^{(1)} \subseteq \cdots \subseteq X^{(p)} = X$, so that
\begin{itemize}
\item $X^{(0)}$ is a singleton;
\item for each $k \in \{0,\dots,p-1\}$, there exists a point $a \in X^{(k)}$ and a segment $S$, one of whose endpoints is $a$, such that
$S \subseteq (X \setminus X^{(k)}) \cup \{a\}$ and $X^{(k+1)}$ is the wedge sum of $X^{(k)}$ and $S$ at $a$.
\end{itemize}
\end{defi}
For any $a \in X$, the \emph{degree} of $a$, denoted $\deg(a)$, is the number of connected components of $X \setminus \{a\}$. This number is always finite, and when it is different from $2$, the point $a$ is called a \emph{vertex}. The vertices necessarily appear in the construction of $X$ at the second step of Definition~\ref{def:simplicial_tree}; as a consequence, $X$ has finitely many vertices. Vertices of degree $1$ are called \emph{leaves}, and those of degree at least $3$ are called \emph{branch points}. In the degenerate case where $X$ consists of a single point, this point has degree $0$ but is still regarded as a leaf.

One can show that the wedge sum of two uniquely geodesic metric spaces is again uniquely geodesic. In view of the inductive construction of simplicial trees, it follows that simplicial trees are uniquely geodesic. Moreover, by Proposition~\ref{prop:wedge_sum_of_neg}, they are compact and of negative type.

We recall from Subsection~\ref{subsec:magnitude_and_diversity} that a geodesic segment $\lseg x,y \rseg$ is positively weighted, with weight $(\delta_x + \delta_y + \calH^1)/2$, and has magnitude $1 + d(x,y)/2$, where $\calH^1$ denotes the one-dimensional Hausdorff measure. The following proposition presents no difficulty and is proved along the same lines as Proposition~\ref{prop:magnitude_weighted_tree}. Note in particular the special role played by branch points, which prevents $X$ from being positively weighted.
\begin{prop}
\label{prop:magnitude_simplicial}
 Every simplicial tree $X$ is weighted, with weight
 \[
 w = \frac{\calH^1}{2} + \sum_{x \in X} \left( 1 - \frac{\deg x}{2}\right) \delta_x
 \]
 and magnitude $|X| = 1 + \calH^1(X)/2$.
\end{prop}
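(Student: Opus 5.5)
We argue by induction on the number $p$ of gluing steps in Definition~\ref{def:simplicial_tree}, exactly as in the proof of Proposition~\ref{prop:magnitude_weighted_tree}, with segments in place of two-point spaces. The base case $p=0$ is a singleton $X^{(0)}=\{x_0\}$. Here $\calH^1(X)=0$ and $\deg x_0 = 0$, so the claimed weight is $\delta_{x_0}$, which indeed satisfies $Z\delta_{x_0} = \ind_X$; the magnitude is $1$.

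For the inductive step, suppose $X^{(k+1)}$ is the wedge sum of $X^{(k)}$ and a segment $S$ at a point $a$, where $a$ is an endpoint of $S$ and $b$ is the other endpoint. By induction, $X^{(k)}$ is weighted with the stated weight $w_k$. By Subsection~\ref{subsec:magnitude_and_diversity}, $S$ is weighted with weight $w_S = (\delta_a+\delta_b+\calH^1_{\mid S})/2$. Proposition~\ref{prop:wedge_sum_of_neg}(b) then shows that $X^{(k+1)}$ is weighted, with weight
\[
w = w_k + w_S - \delta_a
\]
(inclusions suppressed), and with magnitude $|X^{(k)}| + |S| - 1 = 1 + \calH^1(X^{(k)})/2 + \calH^1(S)/2 = 1 + \calH^1(X^{(k+1)})/2$. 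Here we use that $X^{(k)}\cap S = \{a\}$ is $\calH^1$-null, so that $\calH^1$ is additive over the two pieces.

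It remains to check that $w$ has the stated form. The diffuse parts add to $\calH^1_{\mid X^{(k+1)}}/2$. For the atoms, we compare degrees before and after gluing. Gluing changes only two degrees: $\deg a$ increases by $1$, and $b$ is a new point of degree $1$. All other points keep their degree, since interior points of $S$ have degree $2$ and contribute no atom either way. At $a$, the new atom is
\[
\Bigl(1-\tfrac{\deg_{k} a}{2}\Bigr) + \tfrac12 - 1 = 1 - \tfrac{\deg_k a + 1}{2},
\]
which is correct. At $b$, the atom is $\tfrac12 = 1-\tfrac12$, also correct. One corner case needs separate attention: when $X^{(k)}$ is a singleton, $\deg_k a = 0$ and becomes $1$, and the same formula still applies.

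The only delicate point is this degree bookkeeping, namely that the degree (number of components of the complement) behaves as claimed under a wedge sum at $a$. This follows because $X^{(k+1)}\setminus\{a\}$ is the disjoint union of $X^{(k)}\setminus\{a\}$ and $S\setminus\{a\}$, and each of these is clopen in it. Finally, since only finitely many points have degree $\neq 2$, the sum in the statement is a finite signed measure.
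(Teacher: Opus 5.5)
Your proof is correct and is the argument the paper has in mind. The paper only says that the result follows along the same lines as Proposition~\ref{prop:magnitude_weighted_tree}, i.e.\ by induction on the gluing steps using Proposition~\ref{prop:wedge_sum_of_neg}(b) with segments in place of two-point spaces; your degree bookkeeping at $a$ and at the new endpoint $b$ supplies the details left implicit there. The only case to set aside is the degenerate one-point segment $S=\{a\}$, which the definition allows: then $X^{(k+1)}=X^{(k)}$, the step is trivial, and there is no new endpoint $b$.
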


\subsection{$\R$-trees}
\label{subsec:Rtree}
We first give the definition of $\R$-trees, together with a brief overview of their basic geometric properties. The interested reader may consult \cite{evans2008probability} for further details.

\begin{defi}
 An \emph{$\R$-tree} is a metric space such that
 \begin{itemize}
 \item $X$ is uniquely geodesic;
 \item for all $x, y, z \in X$, one has $\lseg x, z \rseg \subseteq \lseg x, y \rseg \cup \lseg y, z\rseg$.
 \end{itemize}
\end{defi}
There are many equivalent definitions of $\R$-trees. Very often, the second condition is replaced by ``for every continuous injective map $\gamma \colon [0, 1] \to X$, one has $\gamma([0, 1]) = \lseg x, y \rseg$''. The degree of a point $x \in X$ is defined as for simplicial trees, as the number of connected components of $X \setminus \{x\}$. Vertices of degree~1 are called leaves, those of degree $\geq 3$ are branch points. One novelty is that points of degree~2 are called \emph{regular points}, as is common in the literature around $\R$-trees. The set of leaves and branch points are respectively denoted $\operatorname{Leaves}(X)$ and $\operatorname{Branch}(X)$. One can check that the wedge sum of two $\R$-trees is an $\R$-tree. This implies that simplicial trees are $\R$-trees.

The \emph{skeleton} of an $\R$-tree is
\[
\operatorname{Sk}(X) = \bigcup_{x, y \in X} \rseg x, y \lseg
\]
If $D$ is a dense subset of $X$, then
\begin{equation}
\label{eq:skeletoncountable}
\operatorname{Sk}(X) = \bigcup_{x, y \in D} \rseg x, y \lseg
\end{equation}
Indeed, for any $x, y \in X$ and $z \in \mathopen{\rseg} x, y \mathclose{\lseg}$, one can find point $x', y' \in D$ such that $d(x, x') < d(x, z)$ and $d(y, y') < d(y, z)$. This implies that $z \not\in \mathopen{\lseg} x, x' \mathclose{\rseg}$ and $z \not\in \mathopen{\lseg} y, y'\mathclose{\rseg}$. However, $z \in \mathopen{\lseg} x, y\mathclose{\rseg} \subseteq \lseg x, x'\rseg \cup \lseg x', y'\rseg \cup \lseg y', y\rseg$. Consequently, $z \in \mathopen{\rseg} x', y' \mathclose{\lseg}$.

We will be interested only in compact $\R$-trees. They are in particular separable, so we can find a countable dense subset $D$. This entails that the skeleton is a Borel subset of $X$, that has Hausdorff dimension equal to~1. The Borel measure $\lambda \colon A \mapsto \calH^1(\operatorname{Sk}(X) \cap A)$ is called the \emph{length measure}. It is satisfies the property $\lambda(\lseg x, y \rseg) = d(x, y)$ for all $x, y \in X$. The value $\lambda(X) = \calH^1(\operatorname{Sk}(X)) \in [0, \infty]$ is the \emph{total length} of $X$. (Note that the length measure makes sense even when $\operatorname{Sk}(X)$ is not a Borel subset of $X$, as $\calH^1$ is an outer measure. It can be proved that $\lambda$ is a Borel measure in any case).

For the reader’s convenience, we collect in the following proposition a number of elementary facts about $\R$-trees, which are well known but not always easy to locate in the literature, and which will be used throughout the paper.
\begin{prop}
\label{prop:facts_Rtree}
Let $X$ be an $\R$-tree.
\begin{enumerate}
\item[\upshape{(a)}] for any $x, y, z \in X$, if $\lseg x, y \rseg \cap \lseg y, z \rseg = \{y\}$, then $y \in \lseg x, z \rseg$;
\item[\upshape{(b)}] (tripod configuration) for any $x, y, z \in X$, there is a unique point $t \in X$ such that $\lseg x, y \rseg \cap \lseg x, z\rseg = \lseg x, t \rseg$. Moreover, $t \in \lseg y, z\rseg$;
\item[\upshape{(c)}] $X$ is of negative type;
\item[\upshape{(d)}] for all $x, y \in X$ and $z \in \mathopen{\rseg} x, y \mathclose{\lseg}$, the points $x, y$ belong to different components of $X \setminus \{z\}$, hence $\deg z \geq 2$;

\item[\upshape{(e)}] $\operatorname{Sk}(X) = X \setminus \operatorname{Leaves}(X)$.
\end{enumerate}
\end{prop}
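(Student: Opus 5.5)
We prove the five items in order, relying only on the two axioms (unique geodesics and $\lseg x,z\rseg \subseteq \lseg x,y\rseg\cup\lseg y,z\rseg$).

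\textbf{Item (a).} By the second axiom, $\lseg x,y\rseg\cup\lseg y,z\rseg$ contains $\lseg x,z\rseg$. Since the two segments meet only at $y$, their union is the image of a continuous injective path from $x$ to $z$. Parametrizing it by arclength gives a curve of length $d(x,y)+d(y,z)$. We then show this union is itself a segment, i.e.\ that $d(x,z)=d(x,y)+d(y,z)$. Suppose $y\notin\lseg x,z\rseg$. Then $\lseg x,z\rseg$ is a connected subset of the union containing $x$ and $z$. Removing $y$ disconnects the union into $\lseg x,y\lseg$ and $\rseg y,z\rseg$, which are relatively clopen. So $\lseg x,z\rseg\setminus\{y\}=\lseg x,z\rseg$ would be disconnected, a contradiction.

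\textbf{Item (b).} The set $\lseg x,y\rseg\cap\lseg x,z\rseg$ is a closed subset of $\lseg x,y\rseg$ containing $x$. It is geodesically convex, since subsegments of segments are the unique geodesics between their endpoints. Identifying $\lseg x,y\rseg$ with $[0,d(x,y)]$, a closed convex set containing $0$ is an interval $[0,s]$; we let $t$ be the point at distance $s$. Then $\lseg t,y\rseg\cap\lseg t,z\rseg=\{t\}$ by maximality, and (a) gives $t\in\lseg y,z\rseg$.

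\textbf{Item (c): negative type.} This is the main obstacle. We first treat finite subsets. Given finitely many points $x_1,\dots,x_n$, the union $\bigcup_{i,j}\lseg x_i,x_j\rseg$ is a simplicial tree. To see this, build it inductively: adjoin $\lseg x_k,t_k\rseg$, where $t_k$ is the projection of $x_k$ onto the previously built subtree, obtained via (b). By (b) this attachment is a wedge sum at $t_k$. Proposition~\ref{prop:wedge_sum_of_neg}(a) and the negative type of intervals then show that every finite subset of $X$ embeds isometrically, with $\sqrt d$, into a Hilbert space.

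To pass from finite subsets to all of $X$, we use the standard fact that negative type is a finitely determined property. One route is Schoenberg's criterion: conditional negative definiteness of $d$ is checked on finite sets, and the Hilbert space is then obtained by a GNS-type construction on finitely supported measures of total mass zero. Alternatively, the finite embeddings are isometries of finite subsets of the separable space $X$ and can be glued by an ultraproduct or inductive-limit argument. This passage to the limit is the delicate point.

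\textbf{Items (d) and (e).} For (d), take $z\in\rseg x,y\lseg$. Any connected set containing $x$ and $y$ contains a path from $x$ to $y$, and by the injective-path characterization that path covers $\lseg x,y\rseg$. More concretely, consider the set $A$ of points $u$ with $z\in\lseg x,u\rseg$. Using (b), both $A$ and its complement in $X\setminus\{z\}$ are open, so $x$ and $y$ lie in different components.

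For (e), one inclusion is immediate: (d) gives $\operatorname{Sk}(X)\subseteq X\setminus\operatorname{Leaves}(X)$. Conversely, let $z$ have degree at least $2$, and pick $x,y$ in distinct components of $X\setminus\{z\}$. Then $\lseg x,y\rseg$ is connected and joins these two components, so it must meet $z$. Hence $z\in\rseg x,y\lseg$, and therefore $z\in\operatorname{Sk}(X)$.
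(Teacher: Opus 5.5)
Your proposal is correct. Items (a), (b) and (e) are essentially the paper's arguments. In (a), your clopen splitting of $(\lseg x,y\rseg\cup\lseg y,z\rseg)\setminus\{y\}$ is just an explicit proof of the ``purely topological fact'' the paper invokes. In (e), you use connectedness of $\lseg x,y\rseg$ directly where the paper goes through (a). The genuine differences are in (c) and (d).

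For (c), the paper never passes through finite subsets. It fixes a root $x$ and notes that the symmetric difference of $\lseg x,y\rseg$ and $\lseg x,z\rseg$ is $\lseg y,z\rseg$ minus the tripod point. Since $\lambda(\lseg y,z\rseg)=d(y,z)$, the map $y\mapsto\ind_{\lseg x,y\rseg}$ is then an explicit isometric embedding of $(X,\sqrt d)$ into $L^2(X,\lambda)$. Your route is also valid: finite spans are simplicial trees, hence of negative type by Proposition~\ref{prop:wedge_sum_of_neg}, and you conclude with Schoenberg's criterion and the GNS construction on finitely supported measures of mass zero. It needs no length measure. However, it imports precisely the characterization the paper chose not to use, and it only yields an abstract Hilbert space. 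The paper's one-line embedding is self-contained, and the same computation gives the $L^1(X,\lambda)$ embedding mentioned at the end of Section~\ref{sec:magtree}.

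Also, your passage to the limit is not delicate. Conditional negative definiteness is a condition on finite configurations, so the GNS step finishes at once. An ``inductive-limit'' gluing, by contrast, would require compatible embeddings, which you do not have.

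For (d), the paper uses local path-connectedness of $X\setminus\{z\}$ and a finely subdivided path whose consecutive segments stay in one component. Your partition of $X\setminus\{z\}$ into $A=\{u : z\in\lseg x,u\rseg\}$ and its complement is more elementary and fully rigorous. If $u\in A$ and $d(u,v)<d(u,z)$, the tripod point of $x,u,v$ lies within $d(u,v)$ of $u$, which gives openness of $A$. Openness of the complement follows from the axiom $\lseg x,v\rseg\subseteq\lseg x,u\rseg\cup\lseg u,v\rseg$.

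Drop your opening sentence in (d), though. The claim ``any connected set containing $x$ and $y$ contains a path'' is false in general. For subsets of an $\R$-tree it is a consequence of (d), so it cannot be used to prove it.
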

\begin{proof}
(a). By definition of an $\R$-tree, we have the inclusion $\lseg x, z \rseg \subseteq \lseg x, y \rseg \cup \lseg y, z \rseg$. Also, by assumption, $\lseg x, y \rseg \cup \lseg y, z\rseg$ is a topological segment, meaning that it is homeomorphic to a nonempty compact interval of $\R$. The endpoints of $\lseg x, y \rseg \cup \lseg y, z \rseg$, namely the points $p$ such that $\lseg x, y \rseg \cup \lseg y, z \rseg \setminus \{p\}$ is connected, are $x$ and $z$.
As $\lseg x, z\rseg$ is also a topological segment and has the same endpoints, it is now a purely topological fact that $\lseg x, z \rseg = \lseg x, y \rseg \cup \lseg y, z\rseg$, meaning that $y \in \lseg x, z\rseg$.

(b). The set $\lseg x, y\rseg \cap \lseg x, z\rseg$ is a closed and geodesically convex subset of the segment $\lseg x, y \rseg$ that contains $x$. Thus it has the form $\lseg x, t \rseg$. The point $t$ is uniquely determined by $x, y, z$ as an endpoint of $\lseg x, y\rseg \cap \lseg x, z\rseg$. 

Moreover, $\lseg y, t \rseg \cap \lseg t, z \rseg = \{t\}$. Indeed, if $\lseg y, t \rseg \cap \lseg t, z \rseg$ contains a point $t'$ other than $t$, one easily infers the contradiction
\[
\lseg x, t \rseg \subsetneq \lseg x, t' \rseg \subseteq \lseg x, y \rseg \cap \lseg x, z \rseg = \lseg x, t \rseg
\]
We can now deduce from~(a) that $t \in \lseg y, z \rseg$.

(c). Fix a point $x \in X$ (that we may regard as a root). It follows from~(a) that $\lambda$-almost everywhere, $|\ind_{\lseg x, y \rseg} - \ind_{\lseg x, z \rseg}| = \ind_{\lseg y, z \rseg}$ for all $y, z \in X$. Thus
\[
\left( \int_X |\ind_{\lseg x, y \rseg} - \ind_{\lseg x, z \rseg} |^2\, d\lambda \right)^{1/2} = \sqrt{d(y, z)}
\]
Put differently, the map $y \mapsto \ind_{\lseg x, y\rseg}$ is an isometric embedding of $(X, \sqrt{d})$ into the Hilbert space $L^2(X, \lambda)$.

%

(d). Suppose, for contradiction, that $x$ and $y$ lie in the same connected component $C$ of $X \setminus \{z\}$. Since $X$ is uniquely geodesic, the space $X \setminus \{z\}$ is locally path-connected; hence $C$ is open and path-connected. In particular, there exists a continuous path $\gamma \colon [0,1] \to C$ such that $\gamma(0)=x$ and $\gamma(1)=y$.

Because $C$ is open and $\gamma([0,1])$ is compact, there exists $\varepsilon>0$ such that for every $t \in [0,1]$, the closed ball $B(\gamma(t), \varepsilon)$ is contained in $C$. By uniform continuity of $\gamma$, there exists $\delta>0$ such that whenever $|t'-t|<\delta$, one has $d(\gamma(t),\gamma(t'))<\varepsilon$.
Choose a subdivision $0=t_0 < t_1 < \cdots < t_n=1$ with $\max_i (t_{i+1}-t_i) \le \delta$. Then for each $i$, the segment $\lseg \gamma(t_i), \gamma(t_{i+1}) \rseg$ is contained in $C$, and therefore
\[
\lseg x, y \rseg \subseteq \bigcup_{i=0}^{n-1} \mathopen{\lseg} \gamma(t_i), \gamma(t_{i+1}) \mathclose{\rseg} \subseteq C
\]
This implies $z \in \lseg x, y \rseg \subseteq C$, contradicting the fact that $z \not\in C$.

(e). The inclusion $\subseteq$ follows from~(d). Conversely, suppose $x \in X$ is not a leaf, so that $X \setminus \{x\}$ has at least two connected components $C_1$ and $C_2$. Let $y_1 \in C_1$ and $y_2 \in C_2$. The sets $\rseg x, y_1\rseg$ and $\rseg x, y_2 \rseg$ are connected subsets of $X \setminus \{x\}$, so they are respectively in the components $C_1$ and $C_2$. It then follows that $x \in \mathopen{\rseg} y_1, y_2 \mathclose{\lseg}$ by~(a), hence $x \in \operatorname{Sk}(X)$.
\end{proof}

In (b), if we think of $x$ as being the root of $X$, then the point $t$ is the lowest ancestor of $y$ and $z$. We briefly comment on~(f). In contrast with the simplicial case, the skeleton $\operatorname{Sk}(X)$ of an $\R$-tree may be negligible compared to $\operatorname{Leaves}(X)$. In particular, $\operatorname{Leaves}(X)$ can have Hausdorff dimension strictly greater than~$1$, a phenomenon that is typical for random $\R$-trees. For example, the Brownian tree described in~\cite{legall2005random} has, almost surely, infinite total length and Hausdorff dimension equal to~$2$. Roughly speaking, one claim of this paper is that the magnitude of an $\mathbb{R}$-tree is governed by the size of its skeleton, whereas its diversity is more closely tied to the size of its leaves.

We now proceed to computing the magnitude of compact $\R$-trees.

\begin{thm}
\label{thm:magnitude_Rtree}
Let $X$ be a compact $\mathbb{R}$-tree. Then
\[
|X| = 1 + \frac{\lambda(X)}{2}.
\]
\end{thm}

\begin{proof}
Fix a point $\rho \in X$, which we may regard as the root of $X$, and let $(x_n)_{n \geq 1}$ be a dense sequence in $X$. Then
\[
\bigcup_{n=1}^\infty \mathopen{\lseg}\rho, x_n \mathclose{\lseg}
= \{\rho\} \cup \operatorname{Sk}(X)
\]
(For the inclusion $\supset$, note that we have $\mathopen{\rseg} x_n, x_m \mathclose{\lseg} \subseteq \mathopen{\lseg} \rho, x_n \mathclose{\lseg} \cup \mathopen{\lseg} \rho, x_m \mathclose{\lseg}$ and $\operatorname{Sk}(X) = \bigcup_{n, m} \rseg x_n, x_m \lseg$.)

Set $X^{(0)} = \{\rho\}$ and define inductively $X^{(n+1)} = X^{(n)} \cup \lseg \rho, x_{n+1} \rseg$.
We claim that each $X^{(n)}$ is a simplicial tree. This is clear for $n=0$. Assume that $X^{(n)}$ is a simplicial tree. If $X^{(n+1)} = X^{(n)}$, there is nothing to prove in this case. Otherwise, since $X^{(n)}$ is a simplicial tree, it is a closed and geodesically convex part of $X$. Hence, $X^{(n)} \cap \lseg \rho, x_{n+1}\rseg$ is a closed and geodesically convex subset of the segment $\lseg \rho, x_{n+1} \rseg$ that contains $\rho$. Thus it has the form $\lseg \rho, a\rseg$, for some $a \in X^{(n)}$.

We claim that $X^{(n+1)}$ is the wedge sum of $X^{(n)}$ and $\lseg a, x_{n+1} \rseg$ at $a$. First we have
\[
\{a\} \subseteq X^{(n)} \cap \lseg a, x_{n+1} \rseg \subseteq X^{(n)} \cap \lseg \rho, x_{n+1} \rseg \cap \lseg a, x_{n+1} \rseg \subseteq
\lseg \rho, a \rseg \cap \lseg a, x_{n+1} \rseg = \{a \}
\]
thus $X^{(n)}$ and $\lseg a, x_{n+1}\rseg$ intersect only at $a$. Now, fix $x \in X^{(n)}$ and $y \in \lseg a, x_{n+1} \rseg$. Then $\lseg x, a\rseg \cap \lseg a, y\rseg = \{a\}$ because $\lseg x, a\rseg \subseteq X^{(n)}$ and $\lseg a, y \rseg \subseteq \lseg a, x_{n+1}\rseg$. Therefore, by Proposition~\ref{prop:facts_Rtree}(a), we have $a \in \lseg x, y \rseg$. It follows that $d(x, y) = d(x, a) + d(a, y)$ and the claim is proved.

By Proposition~\ref{prop:magnitude_simplicial}, it follows that
\[
|X^{(n)}|
= 1 + \frac{1}{2}\calH^1\!\left(\bigcup_{k=1}^n \mathopen{\lseg}\rho, x_k \mathclose{\lseg}\right)
\]
Since $|X| \geq |X^{(n)}|$ for all $n$, we obtain by letting $n \to \infty$
\begin{equation}
\label{eq:halfgeq}
|X| \geq 1 + \frac{1}{2}\calH^1\!\left(\bigcup_{n=1}^\infty \mathopen{\lseg}\rho, x_n \mathclose{\lseg}\right)
= 1 + \frac{\calH^1(\operatorname{Sk}(X))}{2} = 1 + \frac{\lambda(X)}{2}
\end{equation}

Moreover, each $X^{(n)}$ contains $\{x_1,\dots,x_n\}$, hence $X^{(n)} \to X$ in the Gromov--Hausdorff sense. By a result of Meckes \cite[Theorem~2.6]{Meckes2013PositiveDefinite}, magnitude is lower semicontinuous on the class of compact positive definite metric spaces with respect to Gromov--Hausdorff convergence. This yields the reverse inequality in~\eqref{eq:halfgeq} and completes the proof.
\end{proof}
In particular, if $X$ is such that $\lambda(X) = \infty$, then its magnitude is infinite. The question of whether there exist compact metric spaces of negative type with infinite magnitude was raised in \cite{Meckes2013PositiveDefinite, Meckes2015Magnitude, LeinsterMeckes2017156193}. A counterexample was constructed in \cite[Section~2]{MeckesLeinster2023ExtremalMagnitude}, which we briefly recall.

Let $(a_n)$ be a sequence of positive real numbers tending to $0$, sufficiently slowly so that $\sum_{n=1}^\infty a_n = \infty$. For each $n \in \mathbb{N}^*$, let $e_n \in \ell^1$ denote the sequence whose $n$-th coordinate is $1$ and all others are zero. Consider $X$ as the closure of the convex hull of $\{a_n e_n : n \in \mathbb{N}^*\}$ in $\ell^1$.

The compactness of $X$ follows from the fact that $a_n \to 0$, while the fact that $X$ is of negative type comes from its embedding into $\ell^1$. Leinster and Meckes show that $|X| = \infty$, but this also follows from the fact that $X$ contains
\[
\bigcup_{n=1}^\infty \{ t a_n e_n : t \in [0,1] \}
\]
which is a compact $\mathbb{R}$-tree of total length $\sum_{n=1}^\infty a_n = \infty$. Thus, all currently known counterexamples arise, in a certain sense, from trees of infinite total length.

We conclude this section with a comment on embeddings of compact $\R$-trees. One easily checks that, for fixed $x \in X$, the map $X \to L^1(X, \lambda)$ defined by $y \mapsto \ind_{\lseg x, y \rseg}$ is an isometric embedding. It is also a folklore result that compact $\R$-trees can be isometrically embedded into $\ell^1$. The following non-embedding result is not surprising, but its proof relies on the notion of magnitude, and we are not aware of any alternative proof.

\begin{prop}
A compact $\R$-tree of infinite length cannot be isometrically embedded in a finite-dimensional subset of $L^1$.
\end{prop}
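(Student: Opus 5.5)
We argue by contradiction: magnitude is an isometric invariant, and compact subsets of a finite-dimensional subspace of $L^1$ have finite magnitude, while Theorem~\ref{thm:magnitude_Rtree} gives $|X| = \infty$ whenever $\lambda(X)=\infty$. So suppose $X$ is a compact $\R$-tree with $\lambda(X)=\infty$ and $\varphi\colon X \to V$ is an isometric embedding into a subspace $V\subseteq L^1$ with $\dim V = n < \infty$. Then $K=\varphi(X)$ is a compact subset of the normed space $(V,\|\cdot\|_{L^1})$. By isometry invariance, $|K| = |X| = \infty$. The task is to show that $|K|<\infty$.

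\textbf{Reduction to a box in $\ell_1^n$.} Since $V$ is $n$-dimensional, its norm is equivalent to the $\ell^1$ norm on $\R^n$. Let $c>0$ satisfy $\|v\|_{L^1} \ge c\|v\|_{\ell_1^n}$ in some fixed coordinates. The key is a monotonicity property. For compact positive definite spaces, magnitude is monotone under inclusion: in the variational formula~\eqref{eq:def_magnitude}, enlarging the space enlarges the set of competitors, so $K\subseteq K'$ gives $|K|\le|K'|$. It is also monotone under distance-increasing bijections in a suitable sense. I would avoid the latter and instead embed $K$ isometrically into a larger but controlled compact subset of $L^1$. Concretely, $V$ is of negative type (as a subspace of $L^1$), and any compact convex body $B\subseteq V$ containing $K$ satisfies $|K|\le|B|$. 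Hence it suffices to know that a compact convex body in a finite-dimensional subspace of $L^1$ has finite magnitude.

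\textbf{Finiteness for convex bodies.} This is the main obstacle. I would invoke Meckes' result \cite{Meckes2015Magnitude}: for a compact subset of $\R^n$ equipped with a norm whose metric is of negative type, magnitude is bounded by a constant depending on $n$ times a quantity such as the volume of $B+\text{ball}$, or equivalently controlled by maximum diversity and Minkowski dimension. In particular, for compact $A\subseteq\R^n$ of negative type, $|A| \le C_n \,\mathrm{vol}(A_1)$, where $A_1$ is a $1$-neighbourhood. This follows from the estimate via the Sobolev/Bessel potential space $H^{(n+1)/2}$ in that paper, which Meckes establishes for subsets of finite-dimensional spaces of negative type. The subtle point is that Meckes' bound is proved using a Fourier transform of $e^{-\|x\|}$ for the norm of $V$. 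One must verify that this transform is positive and comparable to $(1+|\xi|^2)^{-(n+1)/2}$, which is exactly what the $L^1$ hypothesis (negative type of the norm) guarantees. If citing this directly is uncomfortable, a fallback is to compare with $\ell_1^n$: an explicit product structure gives $|[0,R]^n_{\ell_1}| = (1+R/2)^n$ via $\ell^1$-products, which handles the case where $V$ is isometric to $\ell_1^n$.

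Combining the steps gives $\infty = |X| = |K| \le |B| < \infty$, a contradiction.
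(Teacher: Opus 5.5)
Your overall strategy is the paper's: $|X|=\infty$ by Theorem~\ref{thm:magnitude_Rtree}, magnitude is an isometric invariant, and compact subsets of finite-dimensional subspaces of $L^1$ have finite magnitude. Your monotonicity under inclusion is also correct, since a signed measure on $K$ of total mass $1$ is a competitor on $B$ with the same $\|\cdot\|_\calW$-norm. The gap is in the step you call the main obstacle, the finiteness of $|B|$. It carries all the content of the proof, and as written you do not establish it.

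\textbf{Why the finiteness step fails as written.} The $H^{(n+1)/2}$ estimate of \cite{Meckes2015Magnitude} is specific to the Euclidean norm, where the Fourier transform of $e^{-|x|}$ is a constant multiple of $(1+|\xi|^2)^{-(n+1)/2}$. For a general norm, the reproducing kernel Hilbert space of $e^{-\|x-y\|}$ is a different space. Your proposed verification is false. Negative type (via Bochner) only gives $\widehat{e^{-\|\cdot\|}}\ge 0$, and this transform is not comparable to $(1+|\xi|^2)^{-(n+1)/2}$ even for $\ell_1^n$. There, $\widehat{e^{-\|\cdot\|_1}}(\xi)=\prod_i 2/(1+\xi_i^2)$ decays like $|\xi|^{-2}$ along the coordinate axes and like $|\xi|^{-2n}$ along the diagonal. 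So for $n\ge 2$, both inequalities in the claimed comparability fail.

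What the argument actually needs is a polynomial lower bound $\widehat{e^{-\|\cdot\|}}(\xi)\ge c(1+|\xi|)^{-N}$, so that a smooth bump equal to $1$ on $B$ has finite norm in the kernel space. This is a genuine statement about densities of symmetric $1$-stable laws, and you do not prove it.

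\textbf{The fallback does not cover the general case.} Your fallback via $\ell^1$-products handles only the case where $V$ is isometric to $\ell_1^n$. That excludes most finite-dimensional subspaces of $L^1$, for instance $\ell_2^n$ for $n\ge 2$. As you note yourself, the comparison $\|v\|_{L^1}\ge c\|v\|_{\ell_1^n}$ cannot be used to transfer the bound, because magnitude is not known to be monotone under expanding maps.

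\textbf{The fix.} Do what the paper does and cite \cite[Theorem~3.1]{MeckesLeinster2023ExtremalMagnitude}: compact subsets of finite-dimensional subspaces of $L^1$ have finite magnitude. Applied directly to $K=\varphi(X)$, this gives $|K|<\infty$, which contradicts $|K|=|X|=\infty$. The detour through the convex body $B$ and the Fourier analysis then becomes unnecessary.
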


\begin{proof}
By Theorem~\ref{thm:magnitude_Rtree}, such a tree has infinite magnitude. However, Leinster and Meckes proved that compact subsets of finite-dimensional subspaces of $L^1$ have finite magnitude, \cite[Theorem~3.1]{MeckesLeinster2023ExtremalMagnitude}.
\end{proof}

\section{Maximum diversity of weighted trees}
\label{sec:max_div_wtree}

\subsection{A polynomial-time algorithm for computing maximum diversity}

Let $(X, E, \ell)$ be a weighted tree. We recall that the similarity matrix $Z \colon X \times X \to \mathopen{(} 0, 1 \mathclose{]}$ is symmetric and positive definite. Before computing the maximum diversity, we need to understand the structure of $Z$ and $Z^{-1}$. There exists a centered Gaussian process $U = (U_x)_{x \in X}$ indexed by $X$, with covariance matrix $Z$. Working with this process will make the linear-algebraic manipulations that follow more transparent.

The density of $U$ is expressed in terms of the inverse matrix $Z^{-1}$ as
\begin{equation}
\label{eq:densiteU}
f_U(u) = \frac{1}{(2\pi)^{\# X / 2} \sqrt{\det Z}} \exp\left( - \frac{u^T Z^{-1} u}{2} \right),\qquad \text{for } u\in\R^X
\end{equation}
We will show that this density admits a particularly simple factorization in terms of the edges of $X$. This will reflect the fact that the matrix $Z^{-1}$ is sparse.

We begin by fixing some notation. For any subset $A \subseteq X$, we write $U_A = (U_x)_{x\in A}$ for the restriction of $U$ to $A$. If $X = A\cup B$ is a partition, we decompose the similarity matrix as
\begin{equation}
\label{eq:blocz}
Z = \begin{pmatrix}
Z_A & M^\top \\ 
M & Z_B
\end{pmatrix}
\end{equation}
so that $Z_A$ and $Z_B$ are the covariance matrices of $U_A$ and $U_B$, respectively. Recall the standard fact for centered Gaussian vectors: conditionally on $U_A = u_A$, the vector $U_B$ is Gaussian with mean $M Z_A^{-1} u_A$ and covariance matrix $Z_B - M Z_A^{-1} M^\top$.

Now assume $\# X \ge 2$, and fix a leaf $p \in X$, with unique parent denoted $q$. Let $e \in E$ be the edge connecting $p$ to $q$. We apply the previous result with $A = X \setminus \{p\}$ and $B = \{p\}$ to compute the conditional density of $U_p$ given $U_A = u_A$, denoted $f_{U_p}(\cdot \mid U_A = u_A)$.

In this setting, the row $M$ takes a particularly simple form:
\[
M = e^{-\ell(e)} Z_A(q, \cdot)
\]
since any path from the leaf $p$ to another vertex must pass through $q$. It follows that
\begin{gather*}
\operatorname{Var}(U_p \mid U_A = u_A)
= 1 - e^{-2\ell(e)} Z_A(q,\cdot)\, Z_A^{-1}\, Z_A(\cdot,q)
= 1 - e^{-2\ell(e)} Z_A(q,q)
= 1 - e^{-2\ell(e)} \\
\mathbb{E}(U_p \mid U_A = u_A)
= e^{-\ell(e)} Z_A(q,\cdot)\, Z_A^{-1} u_A
= e^{-\ell(e)} u_q
\end{gather*}

Thus we obtain a spatial Markov property: conditioning on $U_A$ provides no more information about $U_p$ than conditioning on $U_q$ alone.

We can now decompose the density of $U$ as
\begin{align*}
f_U(u)
&= f_{U_p}(u_p \mid U_A = u_A)\, f_{U_A}(u_A) \\
&= \frac{1}{\sqrt{2\pi(1 - e^{-2\ell(e)})}}
\exp\left(
- \frac{(u_p - e^{-\ell(e)} u_q)^2}{2(1 - e^{-2\ell(e)})}
\right)
f_{U_A}(u_A)
\end{align*}
where $f_{U_A}$ denotes the density of $U_A$.

Since $A$ is itself a weighted tree with one fewer leaf, this procedure can be iterated. Repeating the argument yields a full factorization of $f_U$ over the edges of the tree. The preceding discussion therefore shows, by induction, the following structural result for $Z^{-1}$. Alternatively, this result can be proved directly by induction using \cite[Corollary~2.3.3]{Leinster2013Magnitude}.

\begin{prop}
\label{prop:wtree_Z-1}
Let $x,y \in X$ with $x \neq y$. Then
\[
Z^{-1}(x,x) = 1 + \sum_{e \ni x} \frac{e^{-2\ell(e)}}{1 - e^{-2\ell(e)}}
\]
and
\[
Z^{-1}(x,y) = 0 \quad \text{if } x \text{ and } y \text{ are not adjacent}
\]
If $x$ and $y$ are adjacent and $e = \{x,y\}$ denotes the corresponding edge, then
\[
Z^{-1}(x,y) = - \frac{e^{-\ell(e)}}{1 - e^{-2\ell(e)}}
\]
\end{prop}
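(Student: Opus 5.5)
The plan is to read off $Z^{-1}$ from the edge factorization of the Gaussian density $f_U$ obtained just above. Iterating the leaf-removal step (removing a leaf $p$ with neighbour $q$, then a leaf of $X\setminus\{p\}$, and so on down to a single vertex $r$) gives
\[
f_U(u) = \frac{1}{\sqrt{2\pi}} e^{-u_r^2/2} \prod_{e=\{p_e,q_e\}\in E} \frac{1}{\sqrt{2\pi(1-e^{-2\ell(e)})}} \exp\left(-\frac{(u_{p_e}-e^{-\ell(e)}u_{q_e})^2}{2(1-e^{-2\ell(e)})}\right),
\]
where each edge $e$ is oriented from the removed leaf $p_e$ towards its parent $q_e$. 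The base case is a single vertex with $U_r \sim N(0,1)$. Comparing with~\eqref{eq:densiteU}, the quadratic form $u^\top Z^{-1}u$ equals
\[
u_r^2 + \sum_{e\in E} \frac{(u_{p_e}-e^{-\ell(e)}u_{q_e})^2}{1-e^{-2\ell(e)}},
\]
since two positive definite quadratic forms giving proportional Gaussian densities must coincide. Rather than relying on the density normalization, I would state this identity of quadratic forms as an induction directly: the Schur complement formula for the block decomposition~\eqref{eq:blocz} with $B=\{p\}$ gives $u^\top Z^{-1}u = u_A^\top Z_A^{-1}u_A + (u_p - e^{-\ell(e)}u_q)^2/(1-e^{-2\ell(e)})$, using the conditional mean and variance computed above.

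Next I would extract the coefficients. Off-diagonal entries arise only from the cross terms $-2e^{-\ell(e)}u_{p_e}u_{q_e}/(1-e^{-2\ell(e)})$. This gives $Z^{-1}(x,y)=0$ for non-adjacent $x,y$, and $Z^{-1}(x,y) = -e^{-\ell(e)}/(1-e^{-2\ell(e)})$ for adjacent ones, the factor $2$ cancelling against the symmetric pair. Each diagonal coefficient of $u_x^2$ is a sum of contributions, and the delicate point is the bookkeeping. Each vertex $x\neq r$ is the removed leaf $p_e$ of exactly one edge, namely the edge towards the root, which contributes $1/(1-e^{-2\ell(e)}) = 1 + e^{-2\ell(e)}/(1-e^{-2\ell(e)})$. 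Each other edge at $x$ has $x$ as its parent $q_e$ and contributes $e^{-2\ell(e)}/(1-e^{-2\ell(e)})$. Summing gives $1+\sum_{e\ni x} e^{-2\ell(e)}/(1-e^{-2\ell(e)})$. For $x=r$ the term $u_r^2$ supplies the $1$, and every incident edge has $r$ as its parent, so the same formula holds.

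The main obstacle is making this orientation argument precise, in particular checking that the leaf-removal order induces the rooted orientation where each non-root vertex is removed exactly once. A cleaner route avoids this entirely. I would verify directly that the proposed matrix $Q$ satisfies $QZ=I$. For $x\neq z$, write $(QZ)(x,z)=Q(x,x)e^{-d(x,z)}+\sum_{y\sim x}Q(x,y)e^{-d(y,z)}$ and split the neighbours of $x$ into the one on the geodesic $\lseg x,z\rseg$ and the others. The check then reduces to a one-edge identity for the $2\times2$ matrix of the two-point space computed earlier, and the case $x=z$ is the same computation. I would present the Gaussian derivation as the main proof and mention this direct verification as a check.
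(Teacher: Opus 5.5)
Your proposal is correct and follows the paper's own route: there too, the proposition is read off from the edge-by-edge factorization of the Gaussian density $f_U$, obtained by repeatedly removing a leaf and using the conditional mean $e^{-\ell(e)}u_q$ and variance $1-e^{-2\ell(e)}$, and then comparing coefficients in the quadratic form $u^\top Z^{-1}u$. Your Schur-complement version of the induction step and your count showing that each non-root vertex is the removed leaf of exactly one incident edge just spell out what the paper leaves to ``by induction''. Your direct check $QZ=I$ also goes through as you describe: the neighbour on $\lseg x,z\rseg$ contributes $-e^{-d(x,z)}/(1-e^{-2\ell_0})$, which exactly cancels the remaining $1+e^{-2\ell_0}/(1-e^{-2\ell_0})$ part of the diagonal term.
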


We present Algorithm~\ref{algomax} for computing the maximum diversity of a finite weighted tree $X$ with similarity matrix $Z$ in polynomial time. The problem of whether maximum diversity can be computed in polynomial time for classes of metric spaces was raised in \cite[Section~12]{leinster_meckes_diversity}. This principle of this algorithm was recently introduced by Huntsman in \cite[Section~3]{huntsman2025peeling} for a first-order approximation to the diversity maximization problem, in the more general setting of finite metric spaces satisfying a condition slightly stronger than negative type. Some of the arguments presented below are inspired by \cite{huntsman2025peeling}.

\begin{algorithm}
\begin{algorithmic}[1]
\Function{DiversityMaximizingMeasure}{$X$}
\State $Z \gets$ similarity kernel of $X$
\State $A \gets X$ 
\Repeat
    \State $w \gets 0$ \Comment{initialize $w$ to the $0$ function on $X$}
    \State $w_{\mid A} \gets Z^{-1}_A \ind_A$
    \State $A \gets \{x \in X : w(x) > 0\}$
\Until{$w \not\geq 0$}
\State $|X|_+ \gets \sum_{x \in X} w(x)$
\State $\mu \gets w / |X|_+$ \Comment{normalization}
\State \Return $\mu, |X|_+$
\EndFunction
\end{algorithmic}
\caption{Computing the diversity-maximizing measure}
\label{algomax}
\end{algorithm}

\begin{thm}
The algorithm \textsc{DiversityMaximizingMeasure} is correct when applied to a weighted tree.
\end{thm}

\begin{proof}
First, we show that at every step of the algorithm, the set $A$ is non-empty. This is clearly true at line~3. Now assume that $A$ is non-empty at the beginning of an iteration. Then the vector $w$, defined at lines~5--6, has at least one positive entry, since $Zw = 1$ on $A$ and all coefficients of $Z$ are positive. Hence $A$ remains non-empty at line~7.

At the end of the \textbf{repeat} loop, the vector $w$ is non-zero and non-negative. The normalization step (lines~9--10) is therefore well-defined, and the output $\mu$ is a probability measure on $X$.

To prove that $\mu$ is the desired solution, we use the criterion of Proposition~\ref{prop:diversity-max-meas}. We actually show by induction that, at every iteration (immediately after line~6),
\begin{equation}
\label{eq:condZw}
Zw = 1 \text{ on } \operatorname{supp}w = \{x \in X : w(x) \neq 0\}
\qquad \text{and} \qquad
Zw \geq 1 \text{ on } X
\end{equation}
After normalization (lines~9--10), this yields the required properties for $\mu$.

Assume that~\eqref{eq:condZw} holds at some stage of the algorithm. We then define (line~7) $A = \{x \in X : w(x) > 0\}$ and set $B = X \setminus A$. We write $Z$ in block form as in~\eqref{eq:blocz}:
\[
Z =
\begin{pmatrix}
Z_A & M^\top \\
M & Z_B
\end{pmatrix}
\]
By the induction hypothesis, we have
\begin{equation}
\label{eq:blocZw}
\begin{pmatrix}
Z_A & M^\top \\
M & Z_B
\end{pmatrix}
\begin{pmatrix}
w_{\mid A} \\
w_{\mid B}
\end{pmatrix}
=
\begin{pmatrix}
\ind_A \\
u
\end{pmatrix},
\qquad \text{with } u \geq \ind_B
\end{equation}
At the next iteration, the updated vector is
\[
w' =
\begin{pmatrix}
Z_A^{-1}\ind_A \\
0
\end{pmatrix}
\]
A direct computation gives
\[
Zw' = \begin{pmatrix}
        Z_A & M^\top \\ M & Z_B
    \end{pmatrix} \begin{pmatrix}
        Z_A^{-1} \ind_A \\ 0
    \end{pmatrix} = \begin{pmatrix}
        \ind_A \\ M Z^{-1}_A \ind_A
    \end{pmatrix}
\]
Hence $Zw' = 1$ on $A$ and $\operatorname{supp} w' \subseteq A$. It remains to prove that $M Z_A^{-1} \ind_A \geq \ind_B$.

From~\eqref{eq:blocZw}, we obtain
\begin{gather*}
M w_{\mid A} + M Z_A^{-1} M^\top w_{\mid B} = M Z_A^{-1} \ind_A \\
M w_{\mid A} + Z_B w_{\mid B} = u
\end{gather*}
One infers
\[
M Z_A^{-1} \ind_A
= u - (Z_B - M Z_A^{-1} M^\top) w_{\mid B}
\]
Since $u \geq \ind_B$ and $w_{\mid B} \leq 0$, it suffices to show that all coefficients of $Z_B - M Z_A^{-1} M^\top$ are non-negative.

Recall that $Z_B - M Z_A^{-1} M^\top$ is the conditional covariance matrix of $U_B$ given $U_A$. The density of $U$ is given by~\eqref{eq:densiteU}. And the conditional density of $U_B$ given $U_A = u_A$ is
\begin{align*}
u_B \mapsto C f_U(u_A,u_B)
&= C' \exp\left(-\frac{(u_A,u_B)^\top Z^{-1}(u_A,u_B)}{2}\right) \\
&= C'' \exp\left(
-\frac{1}{2} u_B^\top (Z_B - M Z_A^{-1} M^\top)^{-1} u_B
+ \text{affine terms in } u_B
\right)
\end{align*}
where
\[
C = \left( \int_{\R^B} f_U(u_A, u_B) \, du_B\right)^{-1}
\]
and $C', C''$ are other constants (that may depend on $u_A$).

By comparing the two expressions for the conditional density and using the structure of $Z^{-1}$ given in Proposition~\ref{prop:diversity-max-meas}, we observe that the diagonal entries of $(Z_B - M Z_A^{-1} M^\top)^{-1}$ are  positive, while its off-diagonal entries are non-positive. Moreover, both $Z_B - M Z_A^{-1} M^\top$ and its inverse are symmetric positive definite, hence all their eigenvalues are strictly positive.

It follows that $(Z_B - M Z_A^{-1} M^\top)^{-1}$ is a so-called $M$-matrix (see \cite[8.3.P15]{horn2012matrix}). We now recall why this implies that $Z_B - M Z_A^{-1} M^\top$ has non-negative entries. Let $m>0$ denote the largest diagonal entry of $(Z_B - M Z_A^{-1} M^\top)^{-1}$. We may write
\[
(Z_B - M Z_A^{-1} M^\top)^{-1} = mI - P = m\left(I - \frac{P}{m}\right)
\]
where $P$ is a matrix with non-negative entries and $I$ is the identity matrix. By a corollary of the Perron theorem (see \cite[Theorem~8.3.1]{horn2012matrix}), the spectral radius $\rho(P) = \max \{ |\lambda| : \lambda \text{ eigenvalue of } P\}$ is an eigenvalue of $P$. This implies that $m - \rho(P)$ is an eigenvalue of the positive definite matrix $(Z_B - M Z_A M^\top)^{-1}$, thus $m > \rho(P)$. Therefore $I - P/m$ is invertible with
\[
Z_B - M Z_A^{-1} M^\top
= \frac{1}{m}\left(I - \frac{P}{m}\right)^{-1}
= \frac{1}{m}\sum_{k=0}^\infty \left(\frac{P}{m}\right)^k
\]
This expansion shows that all entries of $Z_B - M Z_A^{-1} M^\top$ are non-negative, completing the proof.

Finally, note as a side remark that the conditional covariance matrix $Z_B - M Z_A^{-1} M^\top$ is itself sparse. Using the factorization of the density $f_U$ over edges, one can show that for any $x,y \in B$, the variables $U_x$ and $U_y$ are conditionally independent given $U_A$ whenever $A$ separates $x$ and $y$ in the tree, i.e. the unique path between $x$ and $y$ intersects $A$.
\end{proof}

One may object that the proposed algorithm does not exploit the structure of the matrix $Z$, in particular the sparsity of $Z^{-1}$. It is therefore natural to expect that further improvements could be achieved.

In addition, the question arises as to whether the algorithm remains correct when applied to other classes of finite metric spaces $X$ beyond weighted trees. To avoid pathological cases, one must at least assume that all submatrices $Z_A$ are invertible for $A \subseteq X$ (this is required for solving the linear system at line~6 of the algorithm). This condition is satisfied, for instance, when $X$ is positive definite.

In any case, correctness of the output can always be verified in polynomial time using the criterion of Proposition~\ref{prop:diversity-max-meas}. Numerical experiments suggest that the algorithm remains correct for finite subsets of Euclidean spaces (at least for subsets of $\R^2$ consisting of up to a thousand points).

\begin{openproblem}
  Does \textsc{DiversityMaximizingMeasure} correctly compute the maximizing-diversity measure of all finite subsets of $\R^n$? Of all finite metric spaces of negative type?
\end{openproblem}

\subsection{Consequences for branch points}

The correctness of the algorithm \textsc{DiversityMaximizingMeasure} has the following consequence on branch points.
\begin{cor}
\label{cor:47}
Let $X$ be a weighted tree and let $x \in X$.
\begin{itemize}
\item[\upshape{(a)}] The point $x$ does not belong to the support of the maximising measure whenever
\begin{equation}
\label{eq:excluX}
\sum_{e \ni x} \frac{1}{1 + e^{-\ell(e)}} \leq \deg(x) - 1.
\end{equation}
\item[\upshape{(b)}] In the special case where $\ell(e) \leq \log 2$ for all $e \in E$, no branching point of $X$ belongs to the support of the maximising measure.
\end{itemize}
\end{cor}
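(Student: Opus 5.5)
The plan is to show that, at every pass through the \textbf{repeat} loop of \textsc{DiversityMaximizingMeasure}, the vertex $x$ either is absent from $A$ or receives a non-positive weight. Since the algorithm is correct, this gives $\mu(x)=0$.

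First I would check that every set $A$ handled by the algorithm is geodesically convex in the tree, so that the subgraph it induces is again a weighted tree. I expect this to be the main obstacle, and I am not sure it is true. If it fails, I would instead compute $w_{\mid A} = Z_A^{-1}\ind_A$ for a general $A$ by the same wedge-sum reasoning.

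When $A$ induces a subtree, Proposition~\ref{prop:magnitude_weighted_tree} gives the weight at $x$ explicitly as
\[
w(x) = \sum_{e \ni x,\, e \subseteq A} \frac{1}{1 + e^{-\ell(e)}} - (\deg_A x - 1).
\]
Each summand lies in $(1/2,1)$, so every term $\frac{1}{1+e^{-\ell(e)}} - 1$ is negative. Removing an edge from the sum therefore increases $w(x)$ by less than one, while the constant term $-(\deg_A x - 1)$ increases by exactly one. Consequently $w(x)$ can only increase when edges at $x$ are dropped, and the quantity worth tracking is $\sum_{e\ni x}\frac{1}{1+e^{-\ell(e)}} - (\deg x - 1)$ for the full tree.

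In the first iteration $A = X$, and condition~\eqref{eq:excluX} says exactly that $w(x)\le 0$. So $x$ is removed at line~7 and never re-enters, because $A$ only shrinks. This shrinking holds since the new $w$ is supported in the old $A$, as the proof of correctness shows.

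For a geodesically convex $A$ and $x\in A$, the vertex set $A$ is a weighted tree on its own, and Proposition~\ref{prop:magnitude_weighted_tree} applies directly. For a non-convex $A$, the edges of $X$ restricted to $A$ give a forest, and $Z_A$ is not block diagonal across its components. This is why the first iteration is the important one: it is already at $A=X$ that $x$ gets excluded.

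Part (b) follows from part (a). If $\ell(e)\le\log 2$, then $e^{-\ell(e)}\ge 1/2$, so each term satisfies $\frac{1}{1+e^{-\ell(e)}}\le 2/3$. The left side of~\eqref{eq:excluX} is then at most $\frac{2}{3}\deg x$. This is at most $\deg x - 1$ exactly when $\deg x\ge 3$, which is the case for every branch point, so (a) applies.
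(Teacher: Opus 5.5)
Your proof is correct and is essentially the paper's own argument: condition~\eqref{eq:excluX} says exactly that $w(x)\le 0$ for the first-iteration vector $w=Z^{-1}\ind_X$ given by Proposition~\ref{prop:magnitude_weighted_tree}, so $x$ is discarded at the first pass and never returns because the sets $A$ only shrink, and (b) follows from the bound $\frac{1}{1+e^{-\ell(e)}}\le 2/3$. The discussion of geodesic convexity of later sets $A$ and of how $w(x)$ changes when edges are removed plays no role and can be deleted.
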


\begin{proof}
For (a), the condition~\eqref{eq:excluX} is equivalent to $w(x) \leq 0$, where $w$ is the solution found in Proposition~\ref{prop:magnitude_weighted_tree} of the linear system $Zw = \ind_X$. Hence the vertex $x$ is eliminated at the first iteration of Algorithm~\ref{algomax} and therefore does not belong to the support of the diversity-maximizing measure.

For (b), when $\ell(e) \leq \log 2$ for all $e \in E$,
\[
\sum_{e \ni x} \frac{1}{1 + e^{-\ell(e)}} \leq \frac{2}{3} \deg x
\]
and this is less than or equal to $\deg x - 1$ whenever $\deg x \geq 3$.
\end{proof}

The inequality~\eqref{eq:excluX} is never satisfied when $\deg(x)=2$. Nevertheless, vertices of degree~2 may still be excluded from the support of the diversity-maximizing measure, although this does not occur at the first iteration of \textsc{DiversityMaximizingMeasure}.

We now give an alternative proof of Corollary~\ref{cor:47}, based on the following proposition, which provides a sufficient condition for a point not to belong to the support of the diversity-maximizing measure.

\begin{prop}
\label{prop:49}
Let $X$ be a finite positive definite metric space and let $x \in X$. Suppose there exists a measure $\nu$ on $X$ supported in $X \setminus \{x\}$ such that
\[
Z\nu(y) \leq e^{-d(x,y)} \qquad \text{for all } y \in X
\]
Then $x$ does not belong to the support of the diversity-maximizing measure.
\end{prop}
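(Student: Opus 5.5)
The strategy is by contradiction. Let $\mu$ be the diversity-maximizing measure (unique by Corollary~1, since a finite positive definite space has $\langle\cdot,\cdot\rangle_\calW$ positive definite), and suppose $m = \mu(\{x\}) > 0$. I will assume, as the statement implicitly intends, that $\nu$ is a non-negative measure. I will build a competitor from $\mu$ by moving the mass $m$ sitting at $x$ onto $\nu$. The pointwise inequality $Z\nu \leq Z\delta_x$ then says the competitor is ``nowhere more typical'' than $\mu$. The aim is to show that it performs at least as well as $\mu$, which contradicts uniqueness because the competitor does not charge $x$.

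\textbf{Step 1: the competitor.} Set $\mu' = \mu - m\delta_x + m\nu$. This is a non-negative measure, since $\mu - m\delta_x \geq 0$ and $\nu \geq 0$, and it satisfies
\[
Z\mu' = Z\mu - m(Z\delta_x - Z\nu) \leq Z\mu \quad\text{on } X .
\]
Write $C = 1/|X|_+$. By Proposition~\ref{prop:diversity-max-meas}, $Z\mu = C$ on $\rmsupp\mu$, which contains $x$, and $Z\mu \geq C$ everywhere. Using symmetry of the form, I get
\[
C\,\mu'(X) \leq \int Z\mu\,d\mu' = \int Z\mu'\,d\mu \leq \int Z\mu\,d\mu = C .
\]
Hence $\mu'(X) \leq 1$, which gives $\nu(X) \leq 1$. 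In the same way, $\|\mu'\|_\calW^2 = \int Z\mu'\,d\mu' \leq \int Z\mu\,d\mu'$.

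\textbf{Step 2: compare after normalization.} The probability measure $\mu'' = \mu'/\mu'(X)$ vanishes at $x$, so $\mu''\neq\mu$. By uniqueness, it is enough to show $\|\mu''\|_\calW^2 \leq C$, that is, $\|\mu'\|_\calW^2 \leq C\,\mu'(X)^2$. A cleaner route would be to verify the criterion~\eqref{eq:cond_diversity} for $\mu''$ directly, or to use a one-parameter family $\mu_t = \bigl(\mu + t(\nu - \delta_x)\bigr)/\bigl(1 + t(\nu(X)-1)\bigr)$ for $t\in[0,m]$ and show that $t\mapsto\|\mu_t\|_\calW^2$ is non-increasing. For the latter, I would expand
\[
\|\mu + t(\nu-\delta_x)\|_\calW^2 = C + 2t\int (Z\nu - Z\delta_x)\,d\mu + t^2\|\nu-\delta_x\|_\calW^2 .
\]
The linear term is $\leq 0$ by hypothesis. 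For the quadratic term, $\|\nu\|_\calW^2 = \int Z\nu\,d\nu \leq \int Z\delta_x\,d\nu = Z\nu(x)$, so $\|\nu-\delta_x\|_\calW^2 \leq 1 - Z\nu(x)$.

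\textbf{Main obstacle.} The hardest step is controlling the normalization factor $\bigl(1 + t(\nu(X)-1)\bigr)^{-2}$, which is $\geq 1$ whenever $\nu(X) < 1$. It must be absorbed by the gain in the linear term. My first attempt would be to rescale the problem so that $\nu$ can be taken to have total mass $1$. If that fails, I would instead compare $\mu''$ with $\mu$ through the Cauchy--Schwarz argument used in Proposition~\ref{prop:diversity-max-meas}, exploiting $Z\mu'' \leq Z\mu/\mu'(X)$ together with the inequality $\int Z\mu\,d\mu' \geq C\mu'(X)$ from Step~1. In either route, the final contradiction comes from uniqueness of the diversity-maximizing measure (Corollary~1), since $\mu''(\{x\}) = 0 < \mu(\{x\})$.
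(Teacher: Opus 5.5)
\textbf{The gap.} Your argument is never completed. Step~2 and the ``main obstacle'' paragraph only list routes you might try for absorbing the normalization factor when $\nu(X)<1$. That obstacle cannot be overcome, because for an arbitrary non-negative $\nu$ the statement is false. The trivial case is $\nu=0$. A less degenerate one: take $X=\{x,y\}$ with $d(x,y)=L>0$ and $\nu=e^{-L}\delta_y$. Then $Z\nu(x)=e^{-2L}\le 1$ and $Z\nu(y)=e^{-L}=e^{-d(x,y)}$, so the hypothesis holds. Yet the diversity-maximizing measure is $(\delta_x+\delta_y)/2$, which charges $x$.

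Two of your steps are affected by this. Your bound $\nu(X)\le 1$ from Step~1 is a genuine consequence of $x\in\rmsupp\mu$, not the seed of a contradiction. Your ``first attempt'' of rescaling $\nu$ to mass $1$ destroys the hypothesis: in the example it gives $Z\delta_y(y)=1>e^{-L}$.

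The missing ingredient is the normalization $\nu(X)=1$. The statement is meant for a probability measure $\nu$, and that is how the paper uses it. Its proof needs $\mu+t(\nu-\delta_x)$ to remain a probability measure, and in the second proof of Corollary~\ref{cor:47} the constant is chosen so that $\nu(X)=1$.

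\textbf{How it closes once $\nu(X)=1$.} The proof is then immediate, and your own computations already contain it. The paper's route is your expansion with the sign sharpened: the linear coefficient is \emph{strictly} negative, not merely $\le 0$. Indeed, $Z\nu-Z\delta_x\le 0$ everywhere, and at the atom $x$ of $\mu$ one has $Z\nu(x)-1=\int_X\bigl(e^{-d(x,y)}-1\bigr)\,\nu(dy)<0$. Hence $\|\mu+t(\nu-\delta_x)\|_\calW^2<\|\mu\|_\calW^2$ for small $t>0$. This contradicts optimality directly, with no normalization issue and no appeal to uniqueness.

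Alternatively, your Step~1 already finishes on its own. With $\nu(X)=1$, $\mu'=\mu-m\delta_x+m\nu$ is a probability measure with $\|\mu'\|_\calW^2\le\int Z\mu\,d\mu'=\int Z\mu'\,d\mu\le C$. So $\mu'$ is also diversity-maximizing, while $\mu'(\{x\})=0<\mu(\{x\})$, contradicting uniqueness. This variant uses uniqueness in place of first-order strictness, and it does not even need the strict inequality $Z\nu(x)<1$.
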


\begin{proof}
Let $\mu$ denote the diversity-maximizing measure, and suppose by contradiction that $x \in \rmsupp \mu$. For $t \geq 0$ sufficiently small, the measure $\mu + t(\nu - \delta_x)$ remains a probability measure.

We compute the first variation
\begin{equation}
\label{eq:first_variation}
\| \mu + t(\nu - \delta_x) \|_{\calW}^2
= \| \mu \|_{\calW}^2
+ 2t \int_X Z(\nu - \delta_x)(y)\,\mu(dy)
+ o(t)
\end{equation}
Now observe that $Z(\nu - \delta_x)(y) = Z\nu(y) - e^{-d(x,y)} \leq 0$ for all $y \in X$. Moreover, at the point $x \in \rmsupp \mu$, we have the strict inequality
\[
Z(\nu - \delta_x)(x)
= Z\nu(x) - 1
= \int_X \left(e^{-d(x,y)} - 1\right)\,\nu(dy)
< 0
\]
Since $x \in \rmsupp \mu$, this implies that the integral term in~\eqref{eq:first_variation}
is negative. Hence the squared norm strictly increases for small positive $t$, contradicting the optimality of $\mu$.
\end{proof}

Proposition~\ref{prop:49} is not specific to trees. From a computational point of view, finding a measure $\nu$ satisfying the stated criterion amounts to solving a linear programming problem, which has exponential worst-case complexity, although it remains tractable in many practical instances.

Numerical experiments (in the case of weighted trees) indicate, however, that the converse of this proposition does not hold in general.

\begin{proof}[Another proof of Corollary~\ref{cor:47}]
We only prove (a), since (b) is a direct consequence of it.

Let $x$ satisfy condition~\eqref{eq:excluX}, and let $y_1, \dots, y_{\deg x}$ denote the neighbours of $x$ in $X$. We set $\ell_j = d(x, y_j)$ for all $j \in \{1, \dots, \deg x\}$. We construct a probability measure $\nu$, supported on the neighbours of $x$, which will satisfy the assumption of Proposition~\ref{prop:49}.

At $x$, there is nothing to check since $Z\nu(x) \leq 1$. Let now $y \in X \setminus \{x\}$. Then $y$ belongs to a connected component of $X \setminus \{x\}$, say the one containing $y_i$. We obtain
\[
Z\nu(y)
= \sum_{j=1}^{\deg x} \nu(\{y_j\}) e^{-d(y, y_j)}
= e^{-d(x, y)} \left(
\nu(\{y_i\}) e^{\ell_i}
+ \sum_{\substack{j=1 \\ j \neq i}}^{\deg x} \nu(\{y_j\}) e^{-\ell_j}
\right)
\]
since $d(y, y_i) = d(x, y) - \ell_i$ and $d(y, y_j) = d(x, y) + \ell_j$ for $j \neq i$.

We introduce the constant $S = \sum_{j=1}^{\deg x} \nu(\{y_j\}) e^{-\ell_j}$
so that
\[
Z\nu(y)
= e^{-d(x, y)} \left(
\nu(\{y_i\}) (e^{\ell_i} - e^{-\ell_i}) + S
\right)
\]

This suggests choosing, for all $j \in \{1, \dots, \deg x\}$,
\[
\nu(\{y_j\})
= \frac{C}{e^{\ell_j} - e^{-\ell_j}}
= \frac{C e^{-\ell_j}}{1 - e^{-2\ell_j}}
\]
where $C$ is chosen so that $\nu(X) = 1$.

With this choice, the condition of Proposition~\ref{prop:49} becomes $C + S \leq 1$, equivalently $1 + S/C = 1/C$, that is,
\[
1 + \sum_{j=1}^{\deg x} \frac{e^{-2\ell_j}}{1 - e^{-2\ell_j}}
\leq
\sum_{j=1}^{\deg x} \frac{e^{-\ell_j}}{1 - e^{-2\ell_j}}
\]
which is equivalent to~\eqref{eq:excluX} by elementary computations.
\end{proof}

\section{Support of the diversity-maximizing measure on compact $\R$-trees}
\label{sec:support}

Our first result is of a general nature: branch points in a compact $\R$-tree do not contribute to diversity. The proof of the following theorem is based on the idea of the second proof of Corollary~\ref{cor:47}, but it is more technical.
\begin{thm}
\label{thm:branch}
The branch points of a compact $\R$-tree do not belong to the support of its diversity-maximizing measure.
\end{thm}

\begin{proof}
Let $X$ be the compact $\R$-tree under consideration and fix a branching point $x_0 \in X$, which we assume by contradiction to belong to the support of the diversity-maximising measure $\mu$. It is therefore possible to find $L > 0$ and three points $y_1, y_2, y_3$ at distance $L$ from $x_0$, belonging to three distinct connected components of $X \setminus \{x_0\}$. Note that it is always possible to decrease the value of $L$, which we will do whenever new conditions on $L$ arise.

Let $\varepsilon \in (0,1)$, to be chosen later. We define the probability measures
\[
\tilde{\mu} \colon A \mapsto \frac{\mu(B(x_0, \varepsilon L) \cap A)}{\mu(B(x_0, \varepsilon L))}
\qquad \text{ and } \qquad
\nu = \frac{\delta_{y_1} + \delta_{y_2} + \delta_{y_3}}{3}
\]
Our goal is to show that $\langle \mu, \nu - \tilde{\mu} \rangle_{\calW} < 0$, which will contradict the optimality of $\mu$, since for $t \ge 0$ sufficiently small the measure $\mu + t(\nu - \tilde{\mu})$ is a probability measure and
\[
\| \mu + t(\nu - \tilde{\mu}) \|_{\calW}^2
= \| \mu \|_{\calW}^2
+ 2t \langle \mu, \nu - \tilde{\mu} \rangle_{\calW}
+ o(t).
\]

Let $x \in B(x_0, L)$. We have
\begin{align*}
Z(\nu - \tilde{\mu})(x)
&= \frac{1}{3} \sum_{i=1}^3 \left( e^{-d(x,y_i)} - Z\tilde{\mu}(x) \right) \\
&= \frac{1}{3} \sum_{i=1}^3 \left( e^{-d(x,y_i)} - \int e^{-d(x,y)}\, \tilde{\mu}(dy) \right) \\
&= \frac{1}{3} \sum_{i=1}^3 \left( e^{-d(x,y_i)} - e^{-d(x_0,x)} + \int \left( e^{-d(x_0,x)} - e^{-d(x,y)} \right) \, \tilde{\mu}(dy) \right) \\
&\le \frac{1}{3} \sum_{i=1}^3 \left( e^{-d(x,y_i)} - e^{-d(x_0,x)} \right) + \varepsilon L
\end{align*}
since the function $y \mapsto e^{-d(x,y)}$ is $1$-Lipschitz and $d(x_0,y) \le \varepsilon L$ for all $y \in \rmsupp \tilde{\mu}$.

We first check that if $x \in \lseg x_0, y_i \rseg$ for some $i \in \{1,2,3\}$, and writing $\ell = d(x_0,x)$, then
\[
\frac{1}{3} \sum_{i=1}^3 \left(e^{-d(x,y_i)} - e^{-d(x_0,x)}\right)
= \frac{e^{-(L-\ell)} - e^{-\ell} + 2\left(e^{-L-\ell} - e^{-\ell}\right)}{3}.
\]
A straightforward study of this function shows that it is non-positive for all $\ell \in [0,L]$ whenever $L \leq \log 2$.

When $x$ does not belong to the tripod $\bigcup_{i=1}^3 \lseg x_0, y_i \rseg$, the situation is even simpler: all three terms in the sum are non-positive, since $d(x,y_i) = d(x,x_0) + d(x_0,y_i)$ for all $i$.

In all cases, we obtain
\begin{equation}
\label{eq:znummuB}
\forall x \in B(x_0,L), \qquad Z(\nu - \tilde{\mu})(x) \leq \varepsilon L
\end{equation}

Assume now that $x \notin B(x_0,L)$. We first suppose that $x$ lies in the connected component of $y_1$, $y_2$ or $y_3$ in $X \setminus \{x_0\}$, say that of $y_1$ without loss of generality. In this case, $x \notin \lseg x_0,y_1\rseg$, and therefore
\[
d(x,y_i) =
\begin{cases}
d(x_0,x) - L & \text{if } i = 1 \\
d(x_0,x) + L & \text{if } i \in \{2,3\}
\end{cases}
\]
We deduce that
\[
Z\nu(x) = e^{-d(x_0,x)} \left( \frac{e^L + 2e^{-L}}{3} \right)
\]
The case where $x$ does not belong to any of the connected components of $y_1,y_2,y_3$ in $X \setminus \{x_0\}$ is simpler: in this situation, $Z\nu(x) = e^{-d(x,x_0)} e^{-L}$.
Since one easily checks that $e^{-L} \leq (e^L + 2e^{-L})/3$,
we obtain the estimate
\[
\forall x \notin B(x_0,L), \qquad
Z\nu(x) \le e^{-d(x_0,x)} \left( \frac{e^L + 2e^{-L}}{3} \right)
\]
Moreover, for $x \notin B(x_0,\varepsilon L)$, we have
\[
Z\tilde{\mu}(x) = \int_X e^{-d(x,y)}\,\tilde{\mu}(dy)
\ge e^{-\varepsilon L} e^{-d(x_0,x)}
\]
since $d(x,y) \le d(x_0,x) + \varepsilon L$ for all $y \in \rmsupp \tilde{\mu}$. We thus obtain
\[
\forall x \notin B(x_0,\varepsilon L), \qquad
Z(\nu - \tilde{\mu})(x)
\le e^{-d(x_0,x)} \left( \frac{e^L + 2e^{-2L}}{3} - e^{-\varepsilon L} \right)
\]
Using~\eqref{eq:znummuB} together with the previous estimate, we obtain
\begin{align*}
\langle \mu, \nu - \tilde{\mu} \rangle_{\calW}
&= \int_X Z(\nu - \tilde{\mu})(x)\,\mu(dx) \\
&= \int_{B(x_0,L)} Z(\nu - \tilde{\mu})(x)\,\mu(dx)
+ \int_{X \setminus B(x_0,L)} Z(\nu - \tilde{\mu})(x)\,\mu(dx) \\
&\le \varepsilon L\, \mu(B(x_0,L))
+ \int_{X \setminus B(x_0,L)} e^{-d(x_0,x)}\,\mu(dx)
\left( \frac{e^L + 2e^{-2L}}{3} - e^{-\varepsilon L} \right)
\end{align*}
All our constructions (notably $\nu$ and $\tilde{\mu}$) depend on the choice of $L$ and $\varepsilon$, even if this is not reflected in the notation. Letting $L \to 0$ with $\varepsilon$ fixed, the above inequality yields
\begin{align*}
\limsup_{L \to 0} \frac{\langle \mu, \nu - \tilde{\mu} \rangle_{\calW}}{L}
&\le \varepsilon\, \mu(\{x_0\})
- \int_{X \setminus \{x_0\}} e^{-d(x_0,x)}\,\mu(dx)\left( 1 - \varepsilon \right)
\end{align*}

The integral appearing on the right-hand side cannot vanish. Otherwise, we would necessarily have $\mu = \delta_{x_0}$, and the maximum diversity would equal $1$, which is impossible (except in the trivial case where $X$ consists of a single point, in which case no branch point exists). By choosing $\varepsilon > 0$ sufficiently small, in a way that depends only on $\mu$ and that could be fixed in advance, the right-hand side becomes negative. Hence there exists $L > 0$ sufficiently small such that $\langle \mu, \nu - \tilde{\mu} \rangle_{\calW} < 0$, which concludes the proof.
\end{proof}

If $\operatorname{Branch}(X)$ is dense in $X$, Theorem~\ref{thm:branch} implies that $\rmsupp \mu$ is meager and contains either leaves or regular points.

More generally, diversity-maximising measures tend to concentrate near the leaves. They may, of course, assign mass to regular points. This is already visible in the simple case of a segment $[0,L]$, which has no branch points: the measure $(\delta_0 + \delta_L + \calL^1)/(2 + L)$ places two atoms at the leaves, while being more diffuse on regular points.

For compact $\R$-trees with a rich branching structure, the situation can be significantly more involved. One complication is that the set of leaves may be not closed, it may be even dense as is the case for the Brownian random tree. We expect the following to have a positive answer.

\begin{openproblem}
Let $X$ be a compact $\R$-tree such that $\operatorname{Branch}(X)$ is dense (or equivalently, $\operatorname{Leaves}(X)$ is dense). Is it true that the diversity-maximizing measure $\mu$ is concentrated on leaves, that is, in view of Proposition~\ref{prop:facts_Rtree}(e), $\mu(\operatorname{Sk}(X)) = \mu(X \setminus \operatorname{Leaves}(X)) = 0$?
\end{openproblem}

\begin{appendix}
\section{Notation index}
\needspace{5\baselineskip}
\label{sec:notations}
\begin{center}
\begin{longtable}{p{.155\textwidth}p{.63\textwidth}p{.13\textwidth}}
\toprule
Object & Meaning & Ref. \\
\midrule
\endhead
\bottomrule
\endfoot
$B(x, r)$ & closed ball of center $x$ and radius $r$ & \\
$\ind_A$ & Indicator function of a set $A$ & \\
$\delta_x$ & Dirac measure at a point $x$ & \\
$\calL^1$ & Lebesgue measure on $\R$ & \\
$\calH^1$ & One-dimensional Hausdorff measure on a metric space & \\
$\lambda$ & Length measure on an $\R$-tree & Subsec.~\ref{subsec:Rtree} \\
$\varphi_\# \mu$ & Pushforward of a measure $\mu$ by a measurable map $\varphi$ & \\
$C(X)$ & Space of continuous functions on a compact metric space $X$, with supremum norm & \\
$\calM(X)$ & Space of signed Borel measures on a compact metric space $X$ & \\
$\calM_{\mathrm{atom}}(X)$ & Subspace of $\calM(X)$ linearly spanned by the Dirac measures & Subsec.~\ref{subsec:negtype}\\
$\calP(X)$ & Space of probability measures on a compact metric space $X$, equipped with the weak topology & Subsec.~\ref{subsec:max_div} \\
$Z$ & Similarity kernel $Z(x, y) = e^{-d(x,y)}$ on compact metric space $(X, d)$ & Subsec.~\ref{subsec:max_div}\\
$\langle \cdot, \cdot \rangle_\calW$ & Bilinear form associated to the kernel $Z$. & Eq.~\eqref{eq:def_similarity_form} \\
$ \| \cdot \|_\calW$ & Norm on $\calM(X)$ induced by the inner product $\langle \cdot, \cdot\rangle_\calW$ when $X$ is compact and of negative type & \\
$|X|_+$ & Maximum diversity of a compact metric space $X$ & Eq.~\eqref{eq:def_diversity}\\
$|X|$ & Magnitude of a compact metric space $X$ of negative type & Eq.~\eqref{eq:def_magnitude}\\
$\lseg x, y \rseg$ & Geodesic segment between $x$ and $y$ & Subsec.~\ref{subsec:simplicial} \\
$\operatorname{Sk}(X)$ & Skeleton of an $\R$-tree $X$ & Subsec.~\ref{subsec:Rtree} \\
$\operatorname{Branch}(X)$ & Set of branch points of an $\R$-tree $X$ & Subsec.~\ref{subsec:Rtree} \\
$\operatorname{Leaves}(X)$ & Set of leaves of an $\R$-tree $X$ & Subsec.~\ref{subsec:Rtree} \\
$\deg x$ & Degree of a vertex $x$ in a weighted tree, or of a point $x$ in an $\R$-tree & Subsec.~\ref{subsec:wtree},~\ref{subsec:Rtree}
\end{longtable}
\end{center}

\end{appendix}

\phantomsection
\addcontentsline{toc}{section}{References}
{\small
\bibliographystyle{alpha}
\bibliography{biblio.bib}

\end{document}